\newtheorem{formula}{}[section]
\newtheorem{definition}[formula]{Definition}
\newtheorem{theorem}[formula]{Theorem}
\newtheorem{example}[formula]{Example}
\renewcommand{\mod}{\text{mod} \;}
\begin{document}

\title{Switching graphs and Hadamard
matrices}
\author{Aida Abiad}
\address{Department of Mathematics and Computer Science, 
Eindhoven University of Technology, The Netherlands}
\address{Department of Mathematics and Data Science of Vrije Universiteit Brussels, Belgium}
\email{a.abiad.monge@tue.nl}
\author{Louka Peters}
\address{Department of Mathematics: Analysis, Logic and Discrete Mathematics, Ghent University, Belgium}
\email{louka.peters@ugent.be}

\date{}

\begin{abstract}
Local operations of combinatorial structures (graphs, Hadamard matrices, codes, designs) that maintain the basic parameters unaltered, have been widely used in the literature under the name of switching. We show an equivalence between two switching methods to construct inequivalent Hadamard matrices, which were proposed by Orrick [\emph{SIAM Journal on Discrete Mathematics}, 2008], and the switching method for constructing cospectral graphs which was introduced by Godsil and McKay [\emph{Aequationes Mathematicae}, 1982].\\


\noindent \textbf{Keywords:} switching, cospectral graph, Hadamard matrix\\
\noindent \textbf{MCS:} 05C50
\end{abstract}

\maketitle

\section{Introduction}

Multiple local operations of combinatorial structures (such as \linebreak Hadamard matrices, graphs, codes or designs) that leave the basic parameters unaltered, have been widely used in the literature under the name of \emph{switching}. For such switching methods to work, the combinatorial object has to satisfy some conditions.

One of the multiple known methods to construct Hadamard matrices is due to Orrick \cite{Orrick2008}, who proposed two operations (switching a Hall set and switching a closed quadruple) that interchange substructures of Hadamard matrices, thereby producing new, generally inequivalent, Hadamard matrices. Orrick's switching methods have been used for the enumeration of different equivalence classes of Hadamard matrices, $D$-optimal designs \cite{brent2011finding,orrick2008enumeration}, or Hadamard codes \cite{bloomquist2013classifying}, among others.

On the other hand, switching operations for the construction of cospectral graphs (graphs with the same spectrum) have been introduced in the literature \cite{Godsil1982,Abiad2012,wang2019cospectral, qiu2020theorem,ABS2024switching}. Obtaining cospectral graphs is useful for understanding what graph properties cannot be detected by the spectrum, therefore providing new insights to a conjecture in this area due to Haemers (``almost all graphs are determined by their spectrum''). Godsil and McKay \cite{Godsil1982} introduced one of the first and most fruitful switching methods (GM-switching), which has witnessed several applications. Instances of it are the construction of new strongly regular graphs, or its use for providing a negative answer to Haemers' conjecture \cite{which} for certain graph classes (see e.g.\ \cite{abiad2016switched,cioba,Kubota2016,johnson,haemers2010graphs}).

The link between switching methods for combinatorial designs and codes was investigated in \cite{O2012}. In the same paper the following is mentioned: ``Graphs are not in general considered in the current work; graph switching has been discussed and applied in a variety
of places, and this topic desires a treatment of its own.'' Here, we focus in this problem, and we show  a relation between switching methods for Hadamard matrices and graphs. In particular, we prove an equivalence between  methods to construct Hadamard matrices (switching a Hall set and switching a closed quadruple) by Orrick \cite{Orrick2008} and the switching method to construct cospectral graphs by Godsil and McKay \cite{Godsil1982}. To do so, we use Hadamard graphs, which were introduced by McKay in \cite{McKay1979}.

This paper is organized as follows. Section \ref{sec:prelim} presents the needed notation and preliminary results. Section \ref{sec:newresults} contains the main results (Theorems \ref{closedGM}-\ref{thm:converseGMHallset}), which  establish the equivalences between the switching methods for graphs and Hadamard matrices. Section \ref{sec:concremarks} ends with some concluding remarks and open problems.

\section{Preliminaries}\label{sec:prelim}

Let us start by giving some background information on the known switching methods to construct cospectral graphs and Hadamard matrices.

\subsection{Switching to construct cospectral graphs}

We consider simple graphs. 
The \emph{(adjacency) spectrum} of a graph is the multiset of eigenvalues of its adjacency matrix. Graphs are \emph{cospectral} if they have the same spectrum. Two graphs are said to be \emph{cospectral mates} if they are cospectral and non-isomorphic. Let \(I\) denote the identity matrix, \(J\) the all-one matrix and \(j_n\) the all-one vector of length \(n\).

\begin{theorem}[GM-switching \cite{Godsil1982}]\label{GMswitch}
    Let $G$ be a graph and consider \(\{C_1,\dots,C_t,D\}\) to be a partition of its vertices such that, for all \(i,j\in\{1,\dots,t\}\):
    \begin{enumerate}
        \item[(i)] Every vertex in \(C_i\) has the same number of neighbours in \(C_j\).
        \item[(ii)] Every vertex in \(D\) has \(0\), \(\frac{1}{2}|C_i|\) or \(|C_i|\) neighbours in \(C_i\). 
    \end{enumerate}
    For all \(i\in\{1,\dots,t\}\) and every \(v\in D\) that has exactly \(\frac{1}{2}|C_i|\) neighbours in \(C_i\), swap the adjacencies between \(v\) and \(C_i\). The resulting graph is cospectral with $G$.
\end{theorem}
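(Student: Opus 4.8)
The plan is to produce an orthogonal matrix $Q$ realising a similarity between the adjacency matrix $A$ of $G$ and the adjacency matrix $A'$ of the switched graph; since similar matrices share their characteristic polynomial, this gives cospectrality at once. Order the vertices so that $A$ is partitioned into blocks indexed by $C_1,\dots,C_t,D$, and set $Q=\operatorname{diag}(Q_1,\dots,Q_t,I)$, where on each cell $C_i$ we take $Q_i=\frac{2}{|C_i|}J-I$ (with $J$ the all-one matrix of order $|C_i|$) and on $D$ we take the identity. The first step is to check that $Q$ is a symmetric involution: using $J^2=|C_i|J$ one computes $Q_i^2=I$, so $Q_i^{\top}=Q_i=Q_i^{-1}$ and hence $Q^{-1}=Q=Q^{\top}$ is orthogonal. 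Consequently $QAQ$ is symmetric and cospectral with $A$, and it remains only to identify $QAQ$ with $A'$ block by block.

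For the blocks $A_{ij}$ recording adjacencies between $C_i$ and $C_j$ (including $i=j$), I would invoke condition (i): each such block has constant row sums $r_{ij}$ and, reading condition (i) for the pair $(C_j,C_i)$, constant column sums $r_{ji}$, so that $JA_{ij}=r_{ji}J$, $A_{ij}J=r_{ij}J$ and $JA_{ij}J=|C_i|r_{ij}J$ (all-one matrices of the appropriate shape). Expanding $Q_iA_{ij}Q_j$ and collecting the all-one terms, the coefficient is a multiple of $\frac{r_{ij}}{|C_j|}-\frac{r_{ji}}{|C_i|}$, which vanishes because double-counting the edges between $C_i$ and $C_j$ gives $|C_i|r_{ij}=|C_j|r_{ji}$. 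Hence $Q_iA_{ij}Q_j=A_{ij}$, matching the fact that switching never alters adjacencies inside $\bigcup_i C_i$.

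The heart of the argument, and the step I expect to be most delicate, is the treatment of the blocks $A_{iD}$ between each $C_i$ and $D$. Writing the column of $A_{iD}$ attached to a vertex $v\in D$ as the indicator $a_v$ of its neighbours in $C_i$, with $s_v=j_{|C_i|}^{\top}a_v$ neighbours, condition (ii) forces $s_v\in\{0,\tfrac12|C_i|,|C_i|\}$, and one checks the three cases separately: $Q_ia_v=\frac{2s_v}{|C_i|}j_{|C_i|}-a_v$ equals $a_v$ when $s_v\in\{0,|C_i|\}$ (so such columns are fixed, consistent with no switch), while when $s_v=\tfrac12|C_i|$ it equals $j_{|C_i|}-a_v$, the indicator of the non-neighbours of $v$ --- which is exactly the switched column. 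Thus $Q_iA_{iD}=A'_{iD}$, the transpose relation handles the blocks $A_{Di}$, and the block on $D$ is untouched. The only remaining check is bookkeeping: that $A'=QAQ$ is genuinely a $0/1$ symmetric matrix with zero diagonal, i.e.\ a legitimate adjacency matrix. Symmetry is immediate from $(A')^{\top}=Q^{\top}A^{\top}Q^{\top}=QAQ=A'$, the diagonal cells $A_{ii}$ and the block on $D$ are preserved (so the diagonal stays zero), and the case analysis shows every cross entry is complemented or fixed, hence still $0/1$. This completes the identification $QAQ=A'$ and therefore the proof.
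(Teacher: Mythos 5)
Your argument is correct and complete: the conjugating matrix $Q=\operatorname{diag}(Q_1,\dots,Q_t,I)$ with $Q_i=\frac{2}{|C_i|}J-I$ is an orthogonal involution, the computation $Q_iA_{ij}Q_j=A_{ij}$ via $|C_i|r_{ij}=|C_j|r_{ji}$ and the three-case check $Q_ia_v\in\{a_v,\,j-a_v\}$ on the $C_i$--$D$ blocks are exactly what is needed, and the final bookkeeping confirms $QAQ$ is the adjacency matrix of the switched graph. The paper itself states this theorem as a quoted preliminary from Godsil and McKay and gives no proof, but your argument is precisely the classical proof from that reference, so there is nothing to add.
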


The partition \(\{C_1,\dots,C_t,D\}\) in Theorem \ref{GMswitch} is called the \emph{switching partition} and the set \(\{C_1,\dots,C_t\}\) is the \emph{switching set}. The simplest nontrivial case of Theorem \ref{GMswitch} has one switching block of size four. This case has actually been the most fruitful in the literature, see e.g.\ \cite{abiad2019graph,johnson,abiad2016switched,haemers2010graphs}. 

\subsection{Switching to construct Hadamard matrices}\label{Hadamard}

A \emph{Hadamard matrix} \(H\) is a square \(n \times n\) matrix whose elements are all 1 or \(-1\) and that satisfies \(HH^T = nI\). It is well known that Hadamard matrices only exist for orders 1, 2 and multiples of 4. Two Hadamard matrices are \emph{equivalent} if one can be formed into the other by permutations of any rows and/or columns, and negation of any rows and/or columns. When a Hadamard matrix has a constant row and column sum, it is \emph{regular}.
The smallest Hadamard matrices (up to equivalence) are given below, where we denote \(-1\) as \(-\).
\begin{equation*}\label{smallHad}
\begin{pmatrix}
    1
\end{pmatrix}
\qquad
\begin{pmatrix}
    1&1\\
    1&-
\end{pmatrix}
\qquad
\begin{pmatrix}
    1&1&1&1\\
    1&1&-&-\\
    1&-&1&-\\
    1&-&-&1\\
\end{pmatrix}
\end{equation*}


In the literature, different kinds of normalization on Hadamard matrices are defined. A Hadamard matrix is often called \emph{normalized} if its first row and column are all positive, for example, see \cite{Hedayat1978} and \cite{Jayathilake2013}. 
The 3-normalisation was first introduced by Orrick and Solomon in \cite{Orrick2007} and later redefined by Orrick in \cite{Orrick2008} to be more general. The latter definition is given here.
Denote \(h_i\) as the \(i^{th}\) row of the matrix and \(h_{ij}\) as its \(j^{th}\) element.
The \emph{Hadamard product} is then defined by \[h_i \circ h_j = (h_{i1}h_{j1},\, \dots,\, h_{in}h_{jn}).\]
\begin{definition}
    A Hadamard matrix is called \emph{3-normalized} (on rows \((i,j,k)\)), if there exist three rows \(h_i, h_j, h_k\) for which \(h_i \circ h_j \circ h_k = j_n\), with \(j_n\) the row vector consisting solely of ones. 
\end{definition}
A standard form of a 3-normalized matrix is given by \eqref{3norm}:
\begin{equation} \label{3norm}
\begin{pmatrix}
1\cdots1 & -\cdots- & -\cdots- & 1\cdots1\\
1\cdots1 & -\cdots- & 1\cdots1 & -\cdots-\\
1\cdots1 & 1\cdots1 & -\cdots- & -\cdots-\\
a_4 & b_4 & c_4 & d_4\\
\vdots&\vdots&\vdots&\vdots\\
a_n& b_n & c_n & d_n\\
\end{pmatrix}\end{equation}
with \(a_i, b_i, c_i, d_i\) being \(1\times \frac{n}{4}\) dimensional \(\{1,-1\}\)-vectors, \(i \in \{4,\dots,n\}\).

When only considering the submatrix consisting of these three rows, each column has an even amount of \(-1\)'s, thus are of the form \linebreak \((1,1,1), (1,-1,-1), (-1,1,-1)\) or \((-1,-1,1)\). We can now define a partition on the columns of \(H\) into four equal parts, based upon these options. This partition is called a \emph{field structure}, and the different classes are the \emph{fields}, all of length \(\frac{n}{4}\). 
Remarkable about these fields is that when considering a row \(r \notin \{i,j,k\}\), the sum of the elements in one field is the same for all four fields. This can be shown as follows. Consider the Hadamard matrix to be of the form \eqref{3norm}. If \(a_r\) (resp. \(b_r, c_r, d_r\)) is the sum of the elements of row \(r \in \{4,\dots, n\}\) in the first (resp. second, third and fourth) field, we have \begin{align*}
    a_r - b_r - c_r + d_r &= 0\\
    a_r -b_r + c_r - d_r &=0\\
    a_r + b_r - c_r - d_r &=0
\end{align*}
because of orthogonality with the 3-normalized rows.
It follows that \(a_r = b_r = c_r = d_r\). Since the sum of the elements in a field is even when \(\frac{n}{4}\) is even and odd when \(\frac{n}{4}\) is odd, \(a_r \equiv \frac{n}{4} \; (\mod 2)\), so the total sum of the row is congruent to \(n \; (\mod 8)\). 

How many inequivalent Hadamard matrices there are of a given order, is for many orders still an open question. To construct new inequivalent Hadamard matrices from existing ones,  switching methods have been introduced in the literature. In this work we focus on two such switching methods described by Orrick \cite{Orrick2008}, specifically for 3-normalized matrices:
\begin{itemize}
    \item switching a closed quadruple,
    \item switching a Hall set. 
\end{itemize}

\subsubsection{Switching a closed quadruple}

Recall the \emph{Hadamard product}:

$$h_i \circ h_j = (h_{i1}h_{j1},\, \dots,\, h_{in}h_{jn}).$$
\begin{definition}
    A \emph{closed quadruple} is a set of four rows \((i,j,k,l)\), where \(h_i \circ h_j \circ h_k \circ h_l= \pm j_n\).
\end{definition}

It is readily seen that a 3-normalized Hadamard matrix which also contains \(\pm j_n\) as a separate row, has a closed quadruple. A standard form of such a Hadamard matrix is as follows:

\begin{equation} \label{closedquad}
\begin{pmatrix}
1\cdots1 & -\cdots- & -\cdots- & 1\cdots1\\
1\cdots1 & -\cdots- & 1\cdots1 & -\cdots-\\
1\cdots1 & 1\cdots1 & -\cdots- & -\cdots-\\
1\cdots1 & 1\cdots1 & 1\cdots1 & 1\cdots1\\
a_5 & b_5 & c_5 & d_5\\
\vdots&\vdots&\vdots&\vdots\\
a_n& b_n & c_n & d_n\\
\end{pmatrix}\end{equation}
with \(a_i, b_i, c_i, d_i\) being \(1\times \frac{n}{4}\) dimensional \(\{1,-1\}\)-vectors.

\begin{definition} Let \(H\) be a Hadamard matrix that contains a closed quadruple \(Q\).  Denote \(Q_i\) as all the elements of \(Q\) in the field \(i\). \emph{Switching a closed quadruple} means negating \(Q_i\) for some \(i\in \{1,2,3,4\}\).
\end{definition}

\begin{theorem}\textbf{(Switching a closed quadruple)} \cite{Orrick2008}
    The matrix \(H'\) obtained by switching a closed quadruple of a Hadamard matrix \(H\) is again a Hadamard matrix.
\end{theorem}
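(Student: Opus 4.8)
The plan is to reduce to the standard form \eqref{closedquad} and then verify directly that switching preserves the defining relation $HH^{T}=nI$, which is equivalent to the rows being pairwise orthogonal and each of squared norm $n$. First I would observe that every equivalence operation preserves both the Hadamard property and the closed-quadruple structure: negating a column flips all four factors of the Hadamard product $h_i\circ h_j\circ h_k\circ h_l$ simultaneously, hence multiplies it by $(-1)^{4}=1$, while negating a row only flips the overall sign $\pm j_n$, and permutations merely reorder. Thus we may assume without loss of generality that $H$ has the form \eqref{closedquad}, with the closed quadruple $Q$ being rows $1$--$4$ and the four fields $F_1,\dots,F_4$ being the four column blocks, each of size $\tfrac{n}{4}$. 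Each of rows $1$--$4$ is constant on every field. Now fix the field $i$ to be switched; writing $h'$ for the rows after negating $Q_i$, the entries remain in $\{1,-1\}$, so every row still has squared norm $n$.

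It remains to check $\langle h_p',h_q'\rangle=0$ for $p\neq q$, where $\langle h_p,h_q\rangle=\sum_{c}h_{pc}h_{qc}$ denotes the standard inner product. I would split this into three cases according to how many of $p,q$ lie in $Q$. If both $p,q\in Q$, the entries in field $i$ of both rows are negated, so the field-$i$ contribution $\sum_{c\in F_i}h_{pc}'h_{qc}'=\sum_{c\in F_i}h_{pc}h_{qc}$ is unchanged while all other fields are untouched; hence $\langle h_p',h_q'\rangle=\langle h_p,h_q\rangle=0$. If neither $p$ nor $q$ lies in $Q$, nothing changes and orthogonality is inherited from $H$. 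The only nontrivial case is $p\in Q$, $q\notin Q$ (so $h_q'=h_q$), for which
\[
\langle h_p',h_q\rangle=\langle h_p,h_q\rangle-2\sum_{c\in F_i}h_{pc}h_{qc}=-2\sum_{c\in F_i}h_{pc}h_{qc}.
\]
Since $h_p$ is constant on $F_i$, this remaining sum equals $\pm$ the sum of the entries of row $q$ over $F_i$, i.e.\ $\pm$ a field sum of $q$.

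The crux is therefore to show that every row $q\notin Q$ has all field sums equal to zero. This is precisely the content of the computation following \eqref{3norm}: orthogonality of $h_q$ with the three $3$-normalized rows forces its four field sums to coincide, $a_q=b_q=c_q=d_q$, and orthogonality of $h_q$ with the all-ones row (row $4$ of the quadruple) gives $a_q+b_q+c_q+d_q=0$; combining the two yields $a_q=b_q=c_q=d_q=0$. Consequently $\sum_{c\in F_i}h_{pc}h_{qc}=0$ and $\langle h_p',h_q\rangle=0$, which closes the last case, so $H'(H')^{T}=nI$ and $H'$ is a Hadamard matrix.

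I expect the vanishing of the off-quadruple field sums to be the only real content of the argument; everything else follows from the simple observation that the four quadruple rows are negated together on $F_i$, which leaves their mutual inner products and all row norms intact. One point worth stating carefully is that the reduction to \eqref{closedquad} is genuinely without loss of generality, since the switching operation and the Hadamard property are both equivariant under the equivalence moves used to reach that form; granting this, the case analysis above is entirely routine.
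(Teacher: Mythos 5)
Your proof is correct. The paper states this theorem without proof (it is quoted from Orrick), so there is no in-paper argument to compare against; but the one nontrivial ingredient you isolate --- that every row $q$ outside the quadruple has all four field sums equal to zero, obtained by combining orthogonality with the three $3$-normalized rows (which gives $a_q=b_q=c_q=d_q$, as computed after \eqref{3norm}) with orthogonality against the all-ones fourth row (which gives $a_q+b_q+c_q+d_q=0$) --- is precisely the computation the paper carries out in Section \ref{Hadamard} and reuses in the proof of Theorem \ref{closedGM}. The remaining points (reduction to the form \eqref{closedquad} via equivariance of the switch under Hadamard equivalence, preservation of row norms, and the three-case orthogonality check) are routine and handled correctly.
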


\subsubsection{Switching a Hall set}

\begin{definition}
    A \emph{Hall set} of a Hadamard matrix is a 3-normalized quadruple of rows (or columns), where the fourth row (or column) has one odd-sign entry in each field. 
\end{definition}
A general form of a Hall set is given by \eqref{Hallset}:
\begin{equation}\label{Hallset}
    \begin{pmatrix}
1\cdots1 \; 1& -\cdots- \ -& -\cdots- \ -& 1\cdots1 \; 1\\
1\cdots1 \; 1& -\cdots- \ -& 1\cdots1 \; 1& -\cdots- \ -\\
1\cdots1 \; 1& 1\cdots1 \; 1& -\cdots- \ -& -\cdots- \ -\\
1\cdots1 \, -& 1\cdots1 \, -& 1\cdots1 \ -& 1\cdots1 \; -\\
\end{pmatrix}.
\end{equation}
The Hadamard product of the Hall set thus delivers a row vector with exactly four elements of the opposite sign. The four corresponding columns are called the \emph{Hall columns}. 

\begin{theorem}\cite{Orrick2008}
    Let \(H\) be a Hadamard matrix of size \(n\) with a Hall set. If \(n \equiv 0 \; (\mod 8)\) then the Hall columns form a closed quadruple. If \(n \equiv 4 \;(\mod 8)\) then the Hall columns form a Hall set.
\end{theorem}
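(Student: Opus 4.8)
The plan is to compute the Hadamard product $p = g_1\circ g_2\circ g_3\circ g_4$ of the four Hall columns coordinate by coordinate and read off its sign pattern, because by the definitions a quadruple of columns is a closed quadruple exactly when this product equals $\pm j_n$, and forms a Hall set exactly when the product equals $\pm j_n$ apart from four opposite-sign entries. I fix the standard form \eqref{Hallset} with Hall rows $h_1,\dots,h_4$, and let $g_1,\dots,g_4$ denote the four Hall columns, i.e.\ the last column of each field, where $h_4$ carries its single $-1$. Let $S$ be the set of indices of these four columns. The whole task reduces to determining $p$ in each row.

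First I would dispose of the four rows that define the Hall set. Restricting $g_1,\dots,g_4$ to rows $1,2,3,4$ produces a fixed $4\times 4$ sign pattern read directly from \eqref{Hallset}, and a one-line check shows that the product of the four entries is $+1$ in each of these rows. Hence $p_r = +1$ for $r\in\{1,2,3,4\}$, regardless of $n$.

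The core of the argument is the remaining rows $r\ge 5$, handled through orthogonality. Since $h_4$ is $+1$ in every coordinate outside $S$ and $-1$ on $S$, orthogonality of rows $4$ and $r$, namely $\sum_c h_{4c}h_{rc}=0$, rearranges to $\sum_{c\in S} h_{rc} = \tfrac12\sum_c h_{rc}$. By the field-sum identity established in the preliminaries, each of the four fields of row $r$ sums to a common value $s_r$ with $s_r\equiv \tfrac{n}{4}\pmod 2$; thus the full row sum is $4s_r$ and the four Hall-column entries of row $r$ sum to $2s_r$. For four $\pm 1$ values the product is $+1$ exactly when an even number of them are $-1$, equivalently when their sum is $\equiv 0\pmod 4$; so $p_r=+1$ iff $s_r$ is even, i.e.\ iff $\tfrac{n}{4}$ is even. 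This produces the dichotomy: $p_r=+1$ for all $r\ge 5$ when $n\equiv 0\pmod 8$, and $p_r=-1$ for all $r\ge 5$ when $n\equiv 4\pmod 8$.

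Combining both ranges, $p=j_n$ when $n\equiv 0\pmod 8$, so the Hall columns form a closed quadruple by definition; while for $n\equiv 4\pmod 8$ the product $p$ equals $-j_n$ except at rows $1,2,3,4$, which is precisely the Hall-set signature of four opposite-sign entries. The hard part will be this last point: verifying that in the $n\equiv 4$ case the four exceptional rows lie one in each field of the (row) field-structure attached to the Hall columns, so that the quadruple is genuinely a Hall set and not merely a quadruple whose product has four sign changes. I expect to settle this by passing to the transpose and using the mutual orthogonality of the Hall columns to normalise them, by sign changes (a Hadamard-equivalence operation), into the standard form \eqref{Hallset}.
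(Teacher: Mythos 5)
The paper does not actually prove this statement --- it is imported from Orrick's work with a citation --- so there is no in-paper argument to compare against; I can only judge your proposal on its own terms. Your central computation is correct. The product of the four Hall columns is indeed $+1$ on each of the four Hall-set rows of \eqref{Hallset}, and for a row $r\ge 5$ the chain $\sum_{c\in S}h_{rc}=\tfrac12\sum_c h_{rc}=2s_r$ (orthogonality with the fourth Hall row, combined with the field-sum identity $s_r\equiv \tfrac n4 \pmod 2$ from Section \ref{Hadamard}) correctly yields the parity dichotomy. In particular the case $n\equiv 0\pmod 8$ is finished: a closed quadruple is by definition a quadruple of columns whose product is $\pm j_n$, and you have shown the product is exactly $j_n$.

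The gap is the one you flag yourself, and it is genuine: a Hall set is not defined as a quadruple whose product has four minority-sign entries, but as a 3-normalized quadruple whose fourth member has one odd-sign entry in each field, and your argument so far only produces the four minority entries. Your plan to "normalise them by sign changes into the standard form" presupposes that such a normalisation exists, which is exactly what must be shown. It does close, using only tools already in the paper. First negate one Hall column so that the product $p$ becomes $j_n$ with exactly four $-1$'s; then negate precisely those rows $r$ of $H$ with $(g_4)_r\ne p_r$. Each row negation flips the $r$-th entry of all four columns and hence multiplies $p_r$ by $(-1)^4$, so $p$ is unchanged, while $g_4$ becomes equal to $p$; consequently $g_1\circ g_2\circ g_3=p\circ g_4=j_n$, so three of the Hall columns are 3-normalized and induce a field structure on the rows with fields of size $n/4$. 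Now apply the field-sum computation of Section \ref{Hadamard} to the columns: orthogonality of $g_4$ with $g_1,g_2,g_3$ forces its four field sums to be equal, and if field $i$ contains $k_i$ of the four $-1$'s of $g_4$, that sum is $n/4-2k_i$; equality of the sums together with $\sum_i k_i=4$ gives $k_i=1$ for every $i$, which is precisely the one-odd-entry-per-field condition. With that addendum your proof is complete.
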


From now on we only consider \(n \equiv 4 \; (\mod 8)\) when talking about Hall sets, and will consider the `Hall set' to be the Hall set rows and the Hall set columns together. Hadamard matrices with a Hall set are equivalent to \eqref{Hallsetmatrix}, where all \(A_{ij}\) have row and column sums equal to two if \(i = j\) and 0 otherwise: 
\begin{equation} \label{Hallsetmatrix}
\begin{pmatrix}
    1&-&-&-&1\cdots1&1\cdots1&1\cdots1&-\cdots-\\
    -&1&-&-&1\cdots1&-\cdots-&-\cdots-&-\cdots-\\
    -&-&1&-&1\cdots1&-\cdots-&1\cdots1&1\cdots1\\
    -&-&-&1&1\cdots1&1\cdots1&-\cdots-&1\cdots1\\
    1&1&1&1&&&&\\
    \vdots&\vdots&\vdots&\vdots&A_{11}&A_{12}&A_{13}&A_{14}\\
    1&1&1&1&&&&\\
    -&1&1&-&&&&\\
    \vdots&\vdots&\vdots&\vdots&A_{21}&A_{22}&A_{23}&A_{24}\\
    -&1&1&-&&&&\\
    -&1&-&1&&&&\\
    \vdots&\vdots&\vdots&\vdots&A_{31}&A_{32}&A_{33}&A_{34}\\
    -&1&-&1&&&&\\
    1&1&-&-&&&&\\
    \vdots&\vdots&\vdots&\vdots&A_{41}&A_{42}&A_{43}&A_{44}\\
    1&1&-&-&&&&\\
\end{pmatrix}.
\end{equation}
Denote the different blocks of \eqref{Hallsetmatrix} as follows: 
\[H_4 = 
\begin{pmatrix}
    1&-&-&-\\
    -&1&-&-\\
    -&-&1&-\\
    -&-&-&1\\
\end{pmatrix},\quad
F_1 = \begin{pmatrix}
    1&\cdots&1\\
    1&\cdots&1\\
    1&\cdots&1\\
    1&\cdots&1\\
\end{pmatrix},\quad
F_2 = \begin{pmatrix}
    1&\cdots&1\\
    -&\cdots&-\\
    -&\cdots&-\\
    1&\cdots&1\\
\end{pmatrix},
\]
\[
F_3 = \begin{pmatrix}
    1&\cdots&1\\
    -&\cdots&-\\
    1&\cdots&1\\
    -&\cdots&-\\
\end{pmatrix},
\quad
F_4 = \begin{pmatrix}
    -&\cdots&-\\
    -&\cdots&-\\
    1&\cdots&1\\
    1&\cdots&1\\
\end{pmatrix},
\] 
\[G_1 = F_1^T, \; G_j = -F_j^T \text{ for } j \in \{2,3,4\},\]
resulting in 
\begin{equation}
    \begin{pmatrix}
        H_4 & F_1 & F_2 & F_3 & F_4\\
        G_1 &A_{11}&A_{12}&A_{13}&A_{14}\\ 
        G_2 &A_{21}&A_{22}&A_{23}&A_{24}\\
        G_3& A_{31}&A_{32}&A_{33}&A_{34}\\
        G_4 &A_{41}&A_{42}&A_{43}&A_{44}\\
    \end{pmatrix}.
\end{equation}

\begin{definition}
    \emph{Switching a Hall set} means negating \(F_i\) and corresponding \(G_i\) for one \(i \in \{1,2,3,4\}\). 
\end{definition}

\begin{theorem}\textbf{(Switching a Hall set)} \cite{Orrick2008}
    The matrix produced by switching a Hall set in a Hadamard matrix is again a Hadamard matrix.
\end{theorem}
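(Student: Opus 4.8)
The plan is to verify directly that $H'(H')^{\mathsf T}=nI$, exploiting the block structure. Write the switched index as $i_0$, and partition rows and columns according to the five blocks appearing in the displayed form: the four Hall rows (block $0$) and the four row groups of size $m:=\tfrac n4-1$ (blocks $1,\dots,4$), and likewise for columns the four Hall columns and the four fields. Each $F_i$ has constant columns, so $F_i=f_i\,j_m^{\mathsf T}$ for a fixed pattern vector $f_i\in\{1,-1\}^4$, and $G_i=\epsilon_i F_i^{\mathsf T}$ with $\epsilon_1=1$ and $\epsilon_2=\epsilon_3=\epsilon_4=-1$. Switching on $i_0$ replaces $F_{i_0}$ by $-F_{i_0}$ in row block $0$ and $G_{i_0}$ by $-G_{i_0}$ in row block $i_0$, leaving everything else untouched. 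Since the original matrix $H$ is Hadamard, the block $(a,b)$ of $HH^{\mathsf T}$ equals $n\delta_{ab}I$; I would compute each block of $H'(H')^{\mathsf T}$ as the corresponding block of $HH^{\mathsf T}$ plus a correction coming from the sign flips, and show every correction vanishes.

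The blocks that are immediate are the following. The diagonal blocks $(0,0)$ and $(i_0,i_0)$ are unchanged because the flips appear squared: $(-F_{i_0})(-F_{i_0})^{\mathsf T}=F_{i_0}F_{i_0}^{\mathsf T}$, and similarly for $G_{i_0}$. Any block $(i,j)$ with $i,j\notin\{0,i_0\}$ is literally unchanged. For a block $(0,i)$ with $i\neq 0,i_0$ the only altered summand is $-F_{i_0}A_{i,i_0}^{\mathsf T}$ in place of $F_{i_0}A_{i,i_0}^{\mathsf T}$; because $F_{i_0}=f_{i_0}j_m^{\mathsf T}$, this correction is a multiple of $f_{i_0}(A_{i,i_0}\,j_m)^{\mathsf T}$, and $A_{i,i_0}j_m=0$ since the off-diagonal blocks have zero row sums. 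The same row-sum argument, applied to $G$, handles the blocks $(i_0,i)$ and $(i,i_0)$ for $i\neq 0,i_0$: there the correction is a multiple of $G_{i_0}G_i^{\mathsf T}=\epsilon_{i_0}\epsilon_i\,(f_{i_0}^{\mathsf T}f_i)\,J$, which vanishes because distinct field patterns $f_{i_0},f_i$ are orthogonal (a $4\times 4$ check).

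The main obstacle is the block $(0,i_0)$, the one place where the two sign flips interact, so the correction does not obviously cancel. Here both $H_4 G_{i_0}^{\mathsf T}$ and $F_{i_0}A_{i_0 i_0}^{\mathsf T}$ reverse sign relative to the unaffected summands, and using the original relation $H_4 G_{i_0}^{\mathsf T}+\sum_k F_k A_{i_0 k}^{\mathsf T}=0$ one finds that the new block equals $-2\big(H_4 G_{i_0}^{\mathsf T}+F_{i_0}A_{i_0 i_0}^{\mathsf T}\big)$. I would then evaluate the two terms: $H_4 G_{i_0}^{\mathsf T}=\epsilon_{i_0}(H_4 f_{i_0})j_m^{\mathsf T}$, and $F_{i_0}A_{i_0 i_0}^{\mathsf T}=f_{i_0}(A_{i_0 i_0}j_m)^{\mathsf T}=2f_{i_0}j_m^{\mathsf T}$ using the row-sum-two property of the diagonal blocks. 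Cancellation thus reduces to the single identity $H_4 f_{i_0}=-2\epsilon_{i_0}f_{i_0}$. This is exactly the statement that each field pattern is an eigenvector of $H_4=2I-J$: the all-ones pattern $f_1$ (with $\epsilon_1=1$) has eigenvalue $-2$, while $f_2,f_3,f_4$ (with $\epsilon=-1$), being orthogonal to the all-ones vector, have eigenvalue $+2$. With this identity the block $(0,i_0)$ vanishes, completing the verification that $H'(H')^{\mathsf T}=nI$ and hence that $H'$ is Hadamard. The delicate point throughout is bookkeeping the precise alignment between the pattern vectors $f_i$, the signs $\epsilon_i$, and the row- and column-sum data of the $A_{ij}$, since it is their exact matching that forces every correction term to zero.
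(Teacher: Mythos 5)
Your verification is correct, but note that the paper itself offers no proof of this statement: it is quoted from Orrick \cite{Orrick2008} and used as a black box, so there is no internal argument to compare against. Your block computation is sound on every point I checked: the diagonal blocks $(0,0)$ and $(i_0,i_0)$ are invariant because the sign flips occur squared; the blocks $(0,i)$ with $i\neq 0,i_0$ vanish via $A_{i,i_0}j_m=0$ (zero row sums of off-diagonal $A$-blocks); the blocks $(i_0,i)$ vanish via orthogonality of the four field patterns $f_1=(1,1,1,1)$, $f_2=(1,-1,-1,1)$, $f_3=(1,-1,1,-1)$, $f_4=(-1,-1,1,1)$ (though you label this ``the same row-sum argument,'' the mechanism there is really pattern orthogonality, not row sums); and the genuinely interacting block $(0,i_0)$ reduces, exactly as you say, to $H_4f_{i_0}=-2\epsilon_{i_0}f_{i_0}$, which holds since $H_4=2I-J$ has $f_1$ as a $(-2)$-eigenvector and $f_2,f_3,f_4$, being orthogonal to $j_4$, as $(+2)$-eigenvectors; the sign $\epsilon_1=1$, $\epsilon_{2,3,4}=-1$ in $G_i=\epsilon_iF_i^{\mathsf T}$ is precisely what makes the two cases agree. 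Together with the trivial observation that the entries remain $\pm1$ and that $H'(H')^{\mathsf T}=nI$ implies $(H')^{\mathsf T}H'=nI$ for square matrices, this is a complete, self-contained proof of the cited theorem. What your approach buys is transparency about \emph{why} the normal form \eqref{Hallsetmatrix} is rigged the way it is: the row/column-sum conditions on the $A_{ij}$ and the eigenstructure of $H_4$ are exactly the data needed to kill each correction term, which also makes clear that the argument depends on the standard form and hence, implicitly, on the equivalence reduction the paper performs before defining the switch.
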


For example, when switching a Hall set of the matrix \(H\), shown in \eqref{H12}, with \(i=1\), we get matrix \(H'\) in \eqref{H12'}. In this example the matrix remains equivalent after switching:
\begin{equation}\label{H12}
H = \begin{pmatrix}
    1&-&-&-&1&1&1&1&1&1&-&-\\
    -&1&-&-&1&1&-&-&-&-&-&-\\
    -&-&1&-&1&1&-&-&1&1&1&1\\
    -&-&-&1&1&1&1&1&-&-&1&1\\
    1&1&1&1&1&1&-&1&1&-&-&1\\
    1&1&1&1&1&1&1&-&-&1&1&-\\
    -&1&1&-&-&1&1&1&1&-&1&-\\
    -&1&1&-&1&-&1&1&-&1&-&1\\
    -&1&-&1&1&-&-&1&1&1&1&-\\
    -&1&-&1&-&1&1&-&1&1&-&1\\
    1&1&-&-&-&1&-&1&-&1&1&1\\
    1&1&-&-&1&-&1&-&1&-&1&1\\
    \end{pmatrix},
\end{equation}
\begin{equation}\label{H12'}
    H'=
    \begin{pmatrix}
    1&-&-&-&-&-&1&1&1&1&-&-\\
    -&1&-&-&-&-&-&-&-&-&-&-\\
    -&-&1&-&-&-&-&-&1&1&1&1\\
    -&-&-&1&-&-&1&1&-&-&1&1\\
    -&-&-&-&1&1&-&1&1&-&-&1\\
    -&-&-&-&1&1&1&-&-&1&1&-\\
    -&1&1&-&-&1&1&1&1&-&1&-\\
    -&1&1&-&1&-&1&1&-&1&-&1\\
    -&1&-&1&1&-&-&1&1&1&1&-\\
    -&1&-&1&-&1&1&-&1&1&-&1\\
    1&1&-&-&-&1&-&1&-&1&1&1\\
    1&1&-&-&1&-&1&-&1&-&1&1\\
    \end{pmatrix}.\end{equation}

\subsection{Graphs from Hadamard matrices: Hadamard graphs}\label{graphhad}

It is possible to create a graph from a Hadamard matrix. Different construction methods for this are known, see e.g. \cite{Jayathilake2013, McKay1979}. Next we briefly discuss one of them: Hadamard graphs.

McKay \cite{McKay1979} introduced a way for constructing (non-simple) graphs from any \(\{1, -1\}\)-matrix (not necessarily Hadamard matrices). This is the method that we will use in Section \ref{sec:newresults} to show the new equivalences between switching on graphs and switching on Hadamard matrices.

\begin{definition}\label{HadamardGraphdef}
Let \(H\) be an \(m \times n\) \(\{1,-1\}\)-matrix. Define \(G_H\) as the graph with vertices \(\{v_1, \dots, v_m, v'_1, \dots, v'_m, w_1, \dots w_n, w'_1, \dots w'_n\}\) and edges 
\[
\begin{cases}
    (v_i,w_j),\text{ and } (v'_i, w'_j) &\quad\text{if} \quad h_{ij} = 1,\\
    (v_i, w'_j),\text{ and } (v'_i, w_j) &\quad\text{if} \quad h_{ij} = -1.
\end{cases}\]
Complete \(G_H\) by adding loops on all vertices \(v_i, v'_i\), \(i \in \{1, \dots, m\}\).
If \(H\) is a Hadamard matrix, we call the graph \(G_H\) the \emph{Hadamard graph}.
\end{definition}

For example, consider the equivalent Hadamard matrices \[H_1 = 
\begin{pmatrix}
    1 & 1\\ 1 & -\\
\end{pmatrix},
\qquad
H_2 = \begin{pmatrix}
    - & -\\ 1& -\\
\end{pmatrix},
\qquad
H_3 = \begin{pmatrix}
    1 & -\\ 1 & 1\\
\end{pmatrix}.
\]

The corresponding Hadamard graphs are visualized in Figure \ref{McKayGraphsEx}. It is easy  to see that these three graphs are all isomorphic, which turns out to be a general result.

\begin{figure}[h]
    \centering
    \includegraphics[scale = 0.5]{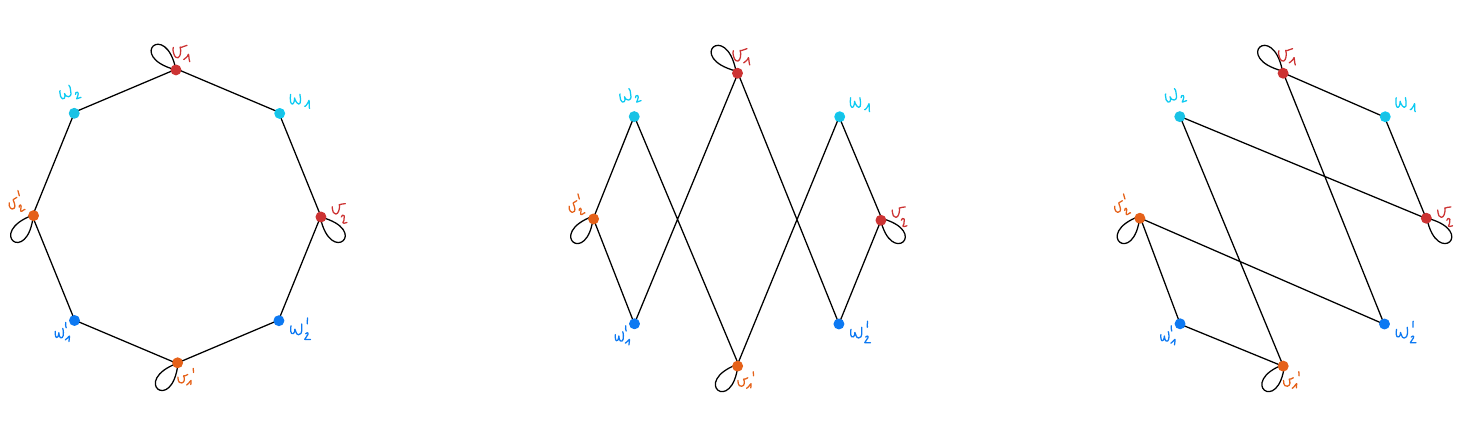}
    \caption{Hadamard graphs corresponding to \(H_1, H_2\) and \(H_3\), respectively.}
    \label{McKayGraphsEx}
\end{figure}

\begin{theorem}\cite{McKay1979}
    Let \(H_1\) and \(H_2\) be Hadamard matrices of order \(n\), and \(G_1 = G_{H_1}\), \(G_2 = G_{H_2}\) their Hadamard graphs. Then \(H_1\) and \(H_2\) are equivalent if and only if \(G_1\) and \(G_2\) are isomorphic. 
\end{theorem}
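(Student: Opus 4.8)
The plan is to prove the two implications separately, using the loops to locate the ``row side'' and ``column side'' of the graph and the Hadamard (orthogonality) property to reconstruct the pairing of mate vertices, after which the matrix can be read off up to equivalence.

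For the forward direction (equivalent matrices yield isomorphic graphs), I would check that each of the four generating operations of Hadamard equivalence lifts to an isomorphism. Permuting rows $i\leftrightarrow i'$ corresponds to swapping the vertex blocks $\{v_i,v_i'\}\leftrightarrow\{v_{i'},v_{i'}'\}$, and a column permutation is the analogous swap on the $w$-side. Negating row $i$ corresponds to swapping $v_i\leftrightarrow v_i'$: inspecting the edge rule in Definition \ref{HadamardGraphdef}, flipping every sign $h_{ij}$ exactly interchanges the neighbourhoods of $v_i$ and $v_i'$, while the loops on both vertices are preserved. Likewise, negating column $j$ corresponds to swapping $w_j\leftrightarrow w_j'$. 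Since each of these maps is a graph isomorphism and they generate all equivalences, any equivalence $H_1\to H_2$ induces an isomorphism $G_1\to G_2$.

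For the converse, let $\phi\colon G_1\to G_2$ be an isomorphism; the goal is to extract permutations and a sign pattern on the rows and columns. First, because loops sit exactly on the vertices $v_i,v_i'$ and an isomorphism preserves loops, $\phi$ must send the looped row side to the row side and the loopless column side to the column side. Next I would recover the pairing $\{v_i,v_i'\}$ purely graph-theoretically, via the criterion that two distinct row vertices have disjoint neighbourhoods if and only if they are mates. Indeed $N(v_i)$ and $N(v_i')$ partition the entire column side and so are disjoint, whereas for $i\neq k$ one computes $|N(v_i)\cap N(v_k)| = |\{j : h_{ij}=h_{kj}\}|$, which equals $n/2$ by orthogonality and is therefore nonzero. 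This is the single point where the Hadamard property is essential, and I expect it to be the main obstacle: without orthogonality two rows could be negatives of one another, producing spurious disjoint neighbourhoods and breaking the reconstruction. The identical argument recovers the column pairs, so $\phi$ maps mate-pairs to mate-pairs on both sides.

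Finally, $\phi$ induces permutations $\sigma,\tau$ of the row- and column-pairs together with, for each pair, a binary record of whether $\phi$ preserves or interchanges the two mates; encode these as signs $\epsilon_i,\delta_j\in\{1,-1\}$. Writing $v_a^{(+1)}=v_a$, $v_a^{(-1)}=v_a'$ and similarly on the $w$-side, the edge rule shows that $(v_a^{(\epsilon)},w_b^{(\delta)})$ is an edge of $G_2$ precisely when $h^{(2)}_{ab}=\epsilon\delta$. Requiring $\phi$ to preserve edges then forces $h^{(2)}_{\sigma(i)\tau(j)}=\epsilon_i\delta_j\,h^{(1)}_{ij}$ for all $i,j$, which is exactly the statement that $H_2$ is obtained from $H_1$ by permuting rows by $\sigma$, columns by $\tau$, and negating the rows and columns indexed by a negative sign. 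Hence $H_1$ and $H_2$ are equivalent. A routine verification confirms that this edge condition is consistent across all four edge types $v$--$w$, $v$--$w'$, $v'$--$w$ and $v'$--$w'$, so no further casework is needed.
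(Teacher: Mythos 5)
Your proof is correct. Note that the paper itself gives no proof of this statement---it is quoted directly from McKay's 1979 paper---so there is nothing internal to compare against; your argument is self-contained and sound, with the two key points handled properly: the forward direction reduces to checking the four generating operations of Hadamard equivalence (row/column permutation as block relabelling, row/column negation as the swap $v_i\leftrightarrow v_i'$ or $w_j\leftrightarrow w_j'$), and the converse correctly uses loops to separate the row side from the column side and row/column orthogonality to show that disjoint neighbourhoods characterize mate pairs, which is indeed the one place the Hadamard condition is genuinely needed.
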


\section{New equivalences}\label{sec:newresults}

In Section \ref{graphhad} we have seen a method to construct graphs from Hadamard matrices. Next we investigate what the effect is of switching a closed quadruple or a Hall set of a Hadamard matrix on the corresponding graph. This way we are able to show new equivalences between the two switching methods for constructing Hadamard matrices introduced in \cite{Orrick2008} and the well-known GM-switching method for constructing cospectral graphs \cite{Godsil1982}. We do so by using McKay's construction of Hadamard graphs \cite{McKay1979} described in Section \ref{graphhad}. Afterwards we computationally explore if the switching methods for Hadamard matrices in \cite{Orrick2008} always produce inequivalent Hadamard matrices.

\subsection{Switching a closed quadruple and GM-switching}\label{sec:switchclosedGM}

In \cite{brent2011finding} the following is mentioned:

``This (switching a closed quadruple) is $H$-equivalent to flipping the signs of all but the leftmost block, which has a nicer interpretation in terms of switching edges in the corresponding bipartite graph (meaning McKay's Hadamard graph)''. 

Here $H$-equivalent stands for the usual equivalence of Hadamard matrices. In this section we show that indeed, this connection between switching a closed quadruple and a Hadamard graph can be formalized, and we use it to find a new equivalence between the methods of switching a closed quadruple and GM-switching.

\begin{theorem} \label{closedGM}
    Let \(H\) be a Hadamard matrix with a closed quadruple, and let \(G_H\) be its corresponding Hadamard graph. Then, there exists a switching partition of \(G_H\) such that switching a closed quadruple on \(H\) is equivalent to GM-switching on \(G_H\).
\end{theorem}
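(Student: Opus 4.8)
The plan is to exhibit one explicit Godsil--McKay partition of $G_H$ and to check that switching it reproduces the Hadamard graph of the closed-quadruple switch, up to isomorphism. First I would normalise: since Hadamard equivalence preserves the isomorphism type of $G_H$ by McKay's theorem \cite{McKay1979}, I may assume $H$ is in the standard form \eqref{closedquad}, with closed quadruple $Q$ the first four rows (rows $1,2,3$ the $3$-normalised rows and row $4$ the all-ones row) and with the four fields $F_1,\dots,F_4$ as the column classes. Writing $v_r,v_r'$ for the row vertices and $w_j,w_j'$ for the column vertices of $G_H$ as in Definition \ref{HadamardGraphdef}, I record the single fact that drives everything: for a column $j$ in field $f$, the signs $(h_{1j},h_{2j},h_{3j},h_{4j})$ equal $(+,+,+,+)$ when $f=1$ and contain exactly two $+$ and two $-$ when $f\in\{2,3,4\}$, because rows $1,2,3$ realise the field pattern and row $4$ is constant.

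The partition I propose is $C_+=\{v_1,v_2,v_3,v_4\}$, $C_-=\{v_1',v_2',v_3',v_4'\}$ and $D$ all remaining vertices. Using the edge rule of Definition \ref{HadamardGraphdef}, each column vertex $w_j$ (and $w_j'$) is adjacent, for every $r\le 4$, to exactly one of $v_r,v_r'$; combined with the sign fact above this gives neighbour counts $(|N(w_j)\cap C_+|,|N(w_j)\cap C_-|)=(4,0)$ for $j\in F_1$ and $(2,2)$ for $j\in F_2\cup F_3\cup F_4$, while every row vertex in $D$ has no neighbour in $C_+\cup C_-$. Since the only edges inside $C_+\cup C_-$ are the (uniform) loops, condition (i) of Theorem \ref{GMswitch} holds trivially, and the counts just computed are exactly the admissible values $0,\tfrac12|C_\pm|,|C_\pm|$ required by condition (ii). Thus $\{C_+,C_-,D\}$ is a valid switching partition.

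Next I would run the switch and read off the matrix. By the counts, GM-switching leaves every field-$1$ column vertex fixed (values $0$ and $|C_\pm|$) and switches every field-$2,3,4$ column vertex with respect to both blocks, i.e.\ complements its neighbourhood inside $C_+\cup C_-$. Complementing the neighbourhood of $w_j$ inside the eight quadruple row-vertices negates all four signs $h_{1j},\dots,h_{4j}$; hence the switched graph is precisely $G_{H''}$, where $H''$ is obtained from $H$ by negating $Q_2,Q_3,Q_4$ --- the ``flip all but the leftmost block'' operation. Finally, $H''$ and the closed-quadruple switch $H'=$ ``negate $Q_1$'' differ by negating the whole of rows $1$--$4$ (negating $Q_1\cup Q_2\cup Q_3\cup Q_4$ is four full-row sign changes), so $H'$ and $H''$ are Hadamard equivalent and, by McKay's theorem, $G_{H'}\cong G_{H''}$. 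Therefore GM-switching $G_H$ along $\{C_+,C_-,D\}$ yields a graph isomorphic to the Hadamard graph of the closed-quadruple switch, as claimed.

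The main obstacle is choosing the partition correctly. The naive guess --- taking the whole quadruple block $\{v_1,\dots,v_4,v_1',\dots,v_4'\}$ as a single switching set --- fails, because every column vertex then sees exactly half of it, so the switch flips all columns; this merely negates rows $1$--$4$ and returns an isomorphic (equivalent) graph rather than the genuine closed-quadruple switch. The key idea is the primed/unprimed split $C_+,C_-$, which turns the uniform ``exactly half'' incidence into the field-sensitive pattern $(4,0)$ versus $(2,2)$, so that field $1$ is frozen and fields $2,3,4$ are flipped. The only other points needing care are bookkeeping: that the uniform loops sit inside the blocks and so do not disturb condition (i), and that one works with the equivalent representative $H''$ rather than with $H'$ directly.
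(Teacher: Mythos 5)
Your argument is correct, but it takes a genuinely different route from the paper's. The paper keeps the eight quadruple row-vertices $\{v_1,\dots,v_4,v_1',\dots,v_4'\}$ together as the set $D$ of Theorem \ref{GMswitch} and completes the partition with nine cells: $C_1$ is the field-one column block (primed and unprimed together), $C_2$ collects all remaining row-vertices, and $C_3,\dots,C_8$ are the other fields split by primes. Every vertex of $D$ then meets exactly half of $C_1$, so the switch swaps the $D$--$C_1$ adjacencies and realises ``negate $Q_1$'' on the nose, with no further identification needed; the cost is a longer equitability check (that each vertex of $C_2$ has $n/8$ neighbours in each $C_i$, $i\ge 3$, via 3-normalisation and orthogonality with the all-ones row). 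You instead place the quadruple row-vertices \emph{inside} the switching set, split by primes into $C_+$ and $C_-$, and put everything else into $D$; this makes the equitability check nearly vacuous and converts the field structure into the $(4,0)$-versus-$(2,2)$ dichotomy, but the operation you realise is ``negate $Q_2\cup Q_3\cup Q_4$,'' so you need the extra (correct) step that this differs from the closed-quadruple switch by four full row negations, plus McKay's theorem to convert Hadamard equivalence back into graph isomorphism. In effect you have formalised the ``flip all but the leftmost block'' remark of Brent et al.\ quoted in Section \ref{sec:switchclosedGM}, whereas the paper's partition performs the switch literally. One shared technicality worth noting: Theorem \ref{GMswitch} is stated for simple graphs while $G_H$ carries loops; in both your partition and the paper's, the looped vertices sit inside single cells with constant diagonal contribution, so the equitable-partition argument goes through, but it would be worth a sentence saying so.
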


\begin{proof}
    Because every Hadamard matrix is equivalent to one of the form \eqref{closedquad} and equivalent matrices deliver isomorphic Hadamard graphs, we can consider \(H\) to be of the form \eqref{closedquad}. Let \(n\) be the order of \(H\). Consider the following partition on the vertices of \(G_H\):
\begin{align*}
D &= \{v_1, v_2, v_3, v_4, v'_1, v'_2, v'_3, v'_4\},\\
C_1 &= \{w_1,\dots,w_{\frac{n}{4}}, w'_1, \dots w'_{\frac{n}{4}}\},\\
C_2 &= \{v_5,\dots,v_n,v'_5,\dots,v'_n\},\\
C_3 &= \{w_{\frac{n}{4} + 1}, \dots, w_{\frac{n}{2}}\},\\ 
C_4 &= \{w'_{\frac{n}{4} + 1}, \dots, w'_{\frac{n}{2}}\},\\
C_5 &= \{w_{\frac{n}{2} + 1}, \dots, w_{\frac{3n}{4}}\},\\ 
C_6 &= \{w'_{\frac{n}{2} + 1}, \dots, w'_{\frac{3n}{4}}\},\\
C_7 &= \{w_{\frac{3n}{4} + 1}, \dots, w_n\},\\
C_8 &= \{w'_{\frac{3n}{4} + 1}, \dots, w'_n\}.
\end{align*} 

The Hadamard graph is bipartite with partition sets \(U = \{v_i,v'_i|\,i \in \{1,\dots,n\}\}\) and \(W = \{w_j,w'_j|\,j \in \{1,\dots,n\}\}\), and loops on all vertices in $U$. Thus there are no edges between \(C_i\) and \(C_j\) for every \(i,j \in \{1\} \cup \{3,\dots,8\}\) and every vertex in $D$ has no neighbours in $C_2$.
By construction every vertex in $U$ is adjacent to either $w_j$ or $w'_j$ for all $j \in \{1,\dots,n\}$ and every vertex in $W$ is adjacent to either $v_i$ or $v'_i$, $i \in \{1,\dots, n\}$.
So every vertex of \(C_i\), \(i \in \{3,\dots,8\}\) has \(n-4\) neighbours in \(C_2\), every vertex of \(C_2\) has \(\frac{n}{4}\) neighbours in \(C_1\) and very vertex in $D$ has $\frac{1}{2}|C_1|$ neighbours in $C_1$.
Notice that set $D$ corresponds to the rows of the closed quadruple. Set $C_1$ refers to the columns in the first field, $\{C_3,C_4\}$ to the second field, \(\{C_5,C_6\}\) to the third and \(\{C_7,C_8\}\) to the forth. 
Since all entries of a row of the closed quadruple are the same in one field, every vertex in $D$ is adjacent to either all or no vertices of $C_i$, $i \in \{3,\dots,8\}$.
We now prove that every vertex of \(C_2\) has \(\frac{n}{8}\) neighbours in \(C_i\) for \(i \in \{3,\dots,8\}\). 
The Hadamard matrix is 3-normalized on the first three rows, thus as remarked in the beginning of Section \ref{Hadamard}, for any other row the sum of all elements in a field is the same for all fields. Because all rows outside the closed quadruple also have to be orthogonal with the all one row of the closed quadruple, this sum has to be 0. So every row, other than the first four, has \(\frac{n}{8}\) positive and \(\frac{n}{8}\) negative entries in each field. For the graph \(G_H\) this means that indeed all vertices of \(C_2\) have \(\frac{n}{8}\) neighbours in \(C_i\) for \(i \in \{3,\dots,8\}\).

The given partition of the vertices of \(G_H\) thus satisfies all conditions of GM-switching and GM-switching on \(G_H\) delivers the same graph as first switching a closed quadruple of \(H\) and then constructing the Hadamard graph.
\end{proof}

\begin{example}
    Take for example the matrix \(H_{16.0}\) of order 16 (see Appendix \ref{H16}), which has a closed quadruple in its first four rows.

Here switching a closed quadruple would mean negating the entries of the first four rows in columns 1, 5, 9 and 13. In the corresponding graph, negating a positive entry on position \((i,j)\) translates into deleting edges \((v_i, w_j)\) and \((v'_i, w'_j)\), and adding edges \((v_i, w'_j)\) and \((v'_i, w_j)\). The transformation for a negative entry is analogous. Figure \ref{subgrapgsSwitchset} visualizes the edges that change in our example. 

\begin{figure}[htp!]
    \begin{subfigure}{0.49\textwidth}
    \centering
    \includegraphics[width = \textwidth]{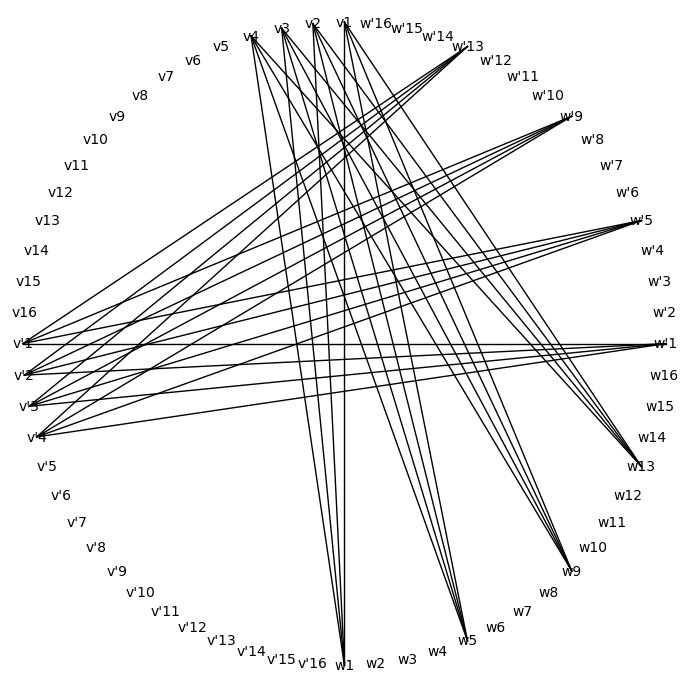}
    \end{subfigure}
    \begin{subfigure}{0.49\textwidth}
    \centering
    \includegraphics[width = \textwidth]{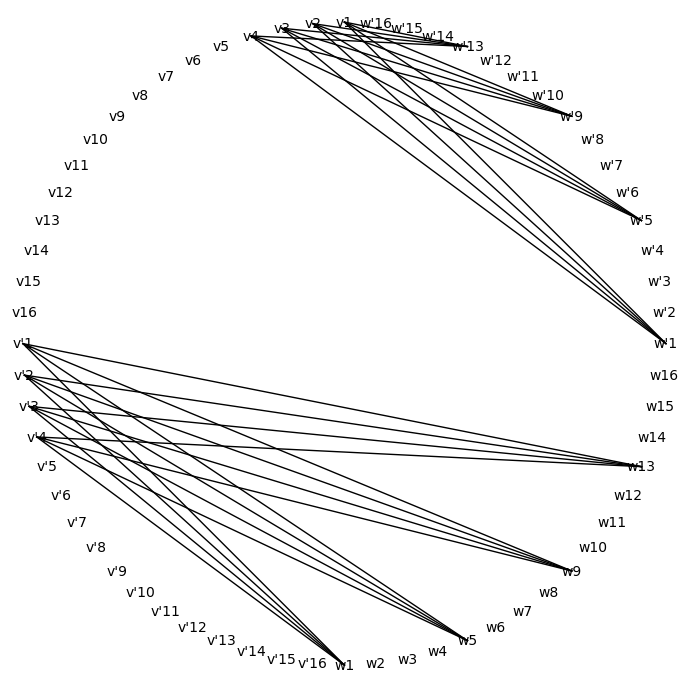}
    \end{subfigure}
    \caption{Subgraph corresponding to the switched field before and after switching a closed quadruple.}
    \label{subgrapgsSwitchset}
\end{figure}

\allowdisplaybreaks
Let \(D\) be the set of vertices $$\{v_1, v_2, v_3, v_4, v'_1, v'_2, v'_3, v'_4\}$$ and \(C_1\) the set 

$$\{w_1, w_5, w_9, w_{13},w'_1, w'_5, w'_9,w'_{13}\},$$

\noindent then this subgraph already satisfies the conditions for GM-switching (see Definition \ref{GMswitch}). All that remains is completing the partition of the vertices in a way that it still satisfies the conditions for GM-switching. We can define \(C_2\) as all the remaining \(v_i\) and \(v'_i\), because \(D\) will have no edges to \(C_2\) (because the graph is bipartite) and because for every \(v_i\) in \(C_2\), \(C_2\) also contains \(v'_i\), so every vertex \(w_j\) will be adjacent to half of the vertices of \(C_2\).
We now divide the remaining \(w_j\) and \(w'_j\) such that every vertex in \(D\) is adjacent to either 0 or all vertices in each part. Because \(D\) corresponds to the rows of the closed quadruple, in the Hadamard matrix this translates to all columns in the closed quadruple being the same. So the rest of the partition is defined by the fields of the Hadamard matrix. To see this, take for example field 2, which consists of columns 2, 6, 10 and 14. Rows 1 and 3 have all positive entries in this field, so \(v_1\) and \(v_3\) will be adjacent to all \(w_j\), and \(v'_1\) and \(v'_3\) will be adjacent to all \(w'_j\), \(j \in \{2,6,10,14\}\). The opposite is true for row 2 and 4. Thus if we consider the sets \(C_3 = \{w_2,w_6,w_{10},w_{14}\}\) and \(C_4 = \{w'_2,w'_6,w'_{10},w'_{14}\}\), any vertex of \(D\) is either adjacent to all or no vertices of \(C_3\). The same holds for \(C_4\). These sets of vertices also satisfy the condition that every vertex of \(C_i\) needs to have an equal amount of neighbours in \(C_j\) for all \(i,j\). 
To conclude, if we define the graph switching partition as follows:
\begin{align*}    
D &= \{v_1, v_2, v_3, v_4, v'_1, v'_2, v'_3, v'_4\},\\
C_1 &= \{w_1, w_5, w_9, w_{13}, w'_1, w'_5, w'_9, w'_{13}\},\\
C_2 &= \{v_5,v_6,\dots,v_{16},v'_5,v'_6,\dots,v'_{16}\},\\
C_3 &= \{w_2,w_6,w_{10},w_{14}\},\\
C_4 &= \{w'_2,w'_6,w'_{10},w'_{14}\},\\
C_5 &= \{w_3,w_7,w_{11},w_{15}\},\\
C_6 &= \{w'_3,w'_7,w'_{11},w'_{15}\},\\
C_7 &= \{w_4,w_8,w_{12},w_{16}\},\\
C_8 &= \{w'_4,w'_8,w'_{12},w'_{16}\},\
\end{align*}
switching a closed quadruple in the Hadamard matrix and then creating the Hadamard graph delivers the exact same result as first creating the Hadamard graph and then using GM-switching.
\end{example}

For deriving a converse statement of Theorem \ref{closedGM}, we need to find out what conditions a GM-switching partition has to satisfy in the Hadamard graph in order to correspond to a closed quadruple. The next result establishes this.

\begin{theorem} \label{thm:converseGMclosed}
    Let $G_H$ be the Hadamard graph of a Hadamard matrix $H$ of order $n$. GM-switching on $G_H$ is equivalent to switching a closed quadruple on $H$ if the switching partition of the vertex set of $G_H$ satisfies the following conditions:

    \begin{itemize}
        \item The GM-partition of the vertex set of $G_H$ has sets \linebreak $\{D,C_1,C_2,\dots,C_m\}$, $m \geq 9$.
        \item $|D| = 8, |C_1| = n/2, |C_3| = |C_4| = \dots = |C_8| = n/4$.
        \item $D$ consist of eight vertices with loops and for each vertex in $D$, there is another vertex in $D$ that has no common neighbours with it.
        \item No vertices of $C_1, C_2, C_3, \dots, C_8$ have loops.
        \item Half of the vertices of $C_1$ are adjacent to the same four vertices of $D$. For each vertex in $C_1$ there is another vertex in $C_1$ that has no common neighbours with it.
        \item Four vertices in $D$ are adjacent to all vertices in $C_3, C_5$ and $C_7$, the other four are adjacent to all vertices in $C_4, C_6$ and $C_8$. 
        \end{itemize}
\end{theorem}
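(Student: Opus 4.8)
The plan is to read each graph-theoretic hypothesis back into a statement about the rows and columns of $H$, thereby reconstructing a closed quadruple and identifying the GM-switch with a field negation; in effect I would invert the construction used in the proof of Theorem \ref{closedGM}. First I would exploit the loops: by Definition \ref{HadamardGraphdef} the only vertices carrying loops are the $2n$ row vertices $v_i,v'_i$, so the hypotheses that $D$ carries loops while $C_1,C_3,\dots,C_8$ do not immediately place $D$ among the rows and $C_1,C_3,\dots,C_8$ among the column vertices $w_j,w'_j$. The key tool is orthogonality: for two distinct rows $h_s,h_t$ one has $\langle h_s,h_t\rangle=0$, so they agree in exactly $n/2$ columns, whence $v_s$ and $v_t$ share exactly $n/2$ common neighbours, while $v_i$ and $v'_i$ share none. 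Thus the only pairs of row vertices with no common neighbour are the complementary pairs $\{v_i,v'_i\}$, and the third hypothesis forces $D$ to be a union of four complementary pairs, i.e.\ $D$ corresponds to four rows $i_1,i_2,i_3,i_4$ of $H$. The identical argument applied to columns (using $H^TH=nI$) shows $C_1$ is a union of $n/4$ complementary column pairs.

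Next I would extract the field structure. Condition five says half of $C_1$ is adjacent to a common four-element subset of $D$; since a column vertex is adjacent to exactly one vertex from each complementary pair in $D$, "adjacent to the same four vertices of $D$" means "having the same sign pattern $(h_{i_1 j},\dots,h_{i_4 j})$ on the four rows". Hence the $n/4$ columns underlying $C_1$ all share one pattern, forming a single field relative to $i_1,\dots,i_4$. Condition six says each of $C_3,\dots,C_8$ is homogeneous with respect to $D$ (all its vertices adjacent to one common quadruple of $D$), with $C_4,C_6,C_8$ adjacent to the quadruples complementary to those of $C_3,C_5,C_7$; reading this back, each block is a set of columns of constant pattern, and $C_3,C_4$ (resp.\ $C_5,C_6$ and $C_7,C_8$) are the $w$- and $w'$-halves of a common field. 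Because the column blocks $C_1,C_3,\dots,C_8$ have total size $n/2+6\cdot n/4=2n$, they account for all column vertices, so the columns of $H$ split into exactly four classes of size $n/4$ on each of which all four rows $h_{i_1},\dots,h_{i_4}$ are constant; the remaining row vertices (those outside $D$) are distributed among the other blocks.

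From here the closed-quadruple property is essentially forced. Recording the four occurring patterns as the columns of a $4\times4$ sign matrix $P$, the relations $\sum_j h_{i_s j}h_{i_t j}=0$ (valid for the four rows of any Hadamard matrix) become $(n/4)(PP^T)_{st}=0$ for $s\neq t$ after summing over the four classes of equal size $n/4$, so $PP^T=4I$ and $P$ is a $4\times4$ Hadamard matrix. Every $4\times4$ Hadamard matrix is equivalent to the standard one, and having all column-products equal is invariant under row and column permutation and negation; hence the four column-products of $P$ coincide, which says exactly that $h_{i_1}\circ h_{i_2}\circ h_{i_3}\circ h_{i_4}=\pm j_n$. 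Therefore $\{i_1,i_2,i_3,i_4\}$ is a closed quadruple and $C_1$ is one of its fields.

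Finally I would match the two operations. Every vertex of $D$ is adjacent to exactly one of $w_j,w'_j$ for each field-$1$ column $j$, so it has exactly $n/4=\tfrac12|C_1|$ neighbours in $C_1$, while the homogeneity of $C_3,\dots,C_8$ gives each vertex of $D$ either $0$ or $|C_i|$ neighbours there. Consequently GM-switching swaps precisely the adjacencies between $D$ and $C_1$ and leaves $C_3,\dots,C_8$ untouched; translating through Definition \ref{HadamardGraphdef}, swapping $v_i$ (and $v'_i$) between $w_j$ and $w'_j$ for all field-$1$ columns is exactly the negation of the four rows' entries in that field, i.e.\ switching the closed quadruple on its first field. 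The row vertices outside $D$ lie in the switching set but receive no change, since $D$ has no neighbours among them, which is why they may be split into several blocks without affecting the outcome. The step I expect to be the main obstacle is the counting in the third paragraph: turning the local, per-block homogeneity statements of conditions five and six into the global assertion that \emph{exactly} four patterns occur, each on \emph{exactly} $n/4$ columns, and that the paired blocks $C_3,C_4$ (etc.) are genuinely the two halves of a single field rather than an accidental match; this is where the size hypotheses and the orthogonality bookkeeping must be combined with care. Once that is settled, the $4\times4$-Hadamard argument and the switch-by-switch comparison are routine.
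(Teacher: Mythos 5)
Your proposal follows the same overall route as the paper---reading the loop, common-neighbour, size and homogeneity conditions back into row/column structure so as to invert the construction of Theorem \ref{closedGM}---but it is genuinely stronger at the one point where the paper's proof is weakest. The paper's argument is essentially a design rationale: it explains why each condition is imposed so that the partition ``resembles'' the one built in Theorem \ref{closedGM}, but it never actually verifies that the four rows singled out by $D$ satisfy $h_{i_1}\circ h_{i_2}\circ h_{i_3}\circ h_{i_4}=\pm j_n$, nor does it explicitly match the GM-switch with the field negation. Your third paragraph supplies exactly this missing step: packaging the four column patterns into a $4\times 4$ sign matrix $P$, deducing $PP^T=4I$ from orthogonality together with the equal class sizes, and using the fact that every $4\times 4$ Hadamard matrix has constant column products. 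That lemma is correct and does not appear in the paper. Your common-neighbour counts are also right (distinct rows share exactly $n/2$ common neighbours, complementary pairs share none), whereas the paper's corresponding sentence (``every vertex $v$ can only be adjacent to $n/2$ other vertices\dots\ $v$ is adjacent to $n/4$ of these'') is garbled. Two caveats trace back to the theorem statement rather than to your argument: the fourth bullet lists $C_2$ among the loopless blocks even though the intended $C_2$ consists of row vertices carrying loops (you silently and correctly drop it), and the sixth bullet must be read distributively---a possibly different quadruple of $D$ for each pair $(C_3,C_4)$, $(C_5,C_6)$, $(C_7,C_8)$---as you do; it is precisely there that the counting you flag as the delicate point (all $2n$ loopless vertices being exhausted by $C_1,C_3,\dots,C_8$, so that $w'_j$ lands in the block complementary to the one containing $w_j$) is needed, and your sketch of it is sound.
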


\begin{proof}
We know that the switching partition of the vertex set of the Hadamard graph $G_H$ should resemble the partition defined in the proof of Theorem \ref{closedGM}. That partition had nine sets, named \(\{D,C_1,C_2,\dots,C_8\}\). The sets that corresponded to the closed quadruple were \linebreak $D, C_1,C_3,C_4,\dots,C_8$, so we need these sets again. However, set $C_2$ could be split in to multiple smaller sets and still satisfy the conditions of GM-switching. Therefore we need at least nine sets in our partition of the vertex set of $G_H$.

The Hadamard graph has $4n$ vertices, two for each row and two for each column. The vertices that correspond to row $i$ are labeled $v_i$ and $v'_i$. The vertices that correspond to column $j$ are labeled $w_j$ and $w'_j$. Since all vertices $v_i$ and $v'_i$, $i \in \{1,\dots,n\}$, have loops and all $w_j$ and $w'_j$, $j \in \{1,\dots,n\}$ do not, we can always know if a vertex of $G_H$ was a $v, v'$ or $w, w'$. 
We need our set $D$ to correspond to four different rows of the Hadamard matrix, so we need eight vertices that have loops. For every $v$ in $D$, the corresponding $v'$ must also be in $D$. 
Notice that every vertex $v$ can only be adjacent to $n/2$ other vertices, namely those labeled $w_j$ or $w'_j$. Vertex $v$ is adjacent to $n/4$ of these $n/2$ vertices. By the construction of the Hadamard graph and the fact that a Hadamard matrix can never have the same row twice, the corresponding $v'$ is the only vertex that is adjacent to the other $n/4$ of these vertices.
Thus our $D$ needs four different vertices with loops and for each of them the vertex witch a loop that has no common neighbours with it.

Set $C_1$ must resembles the columns of the field on which we switch, so we need $n/2$ vertices that have no loops. In this field, all rows have the same entry sequence (or the negation of it), so half of the vertices in $C_1$ must be adjacent to the same four vertices in $D$. Again, for every vertex $w$ in $C_1$, we need the corresponding $w'$ to also be in $C_1$. For the same reasoning as in $D$, these vertices $w'$ are the vertices with no loops that have no common neighbours with one of the other vertices in $C_1$.

Sets $C_3, C_4, \dots, C_8$ should correspond to the entries of the closed quadruple in the other fields, separated such that every vertex $v$ in $D$ is adjacent to all or none of the vertices of $C_i, i \in \{3,\dots,8\}$. For every $C_i$ that $v$ is adjacent to, there must be a $C_j, j \in \{3,\dots,8\}$ in which $v$ has no neighbours, namely the $C_j$ that corresponds to the same field as $C_i$. Since the corresponding $v'$ is also in $D$, we can label all $C_i$ such that four vertices in $D$ are adjacent to all vertices in $C_3, C_5$ and $C_7$, the other four are adjacent to all vertices in $C_4, C_6$ and $C_8$.

All other sets in the switching partition can now only contain vertices with loops. These vertices correspond to all the other rows of the Hadamard matrix, and are therefore not important for the representation of the closed quadruple. Thus there are no further restrictions needed on these vertex sets.
\end{proof}

\subsection{Switching a Hall set and GM-switching}\label{sec:GMHall}

For Hadamard matrices with a Hall set, a similar switching partition as in Section \ref{sec:switchclosedGM} can be defined. 

\begin{theorem}\label{HallGM}
    Let \(H\) be a Hadamard matrix with a Hall set, and let \(G_H\) be its corresponding Hadamard graph. Then, there exists a switching partition of \(G_H\) such that switching a Hall set on \(H\) is equivalent to GM-switching on \(G_H\).
\end{theorem}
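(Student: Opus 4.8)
The plan is to mirror the proof of Theorem~\ref{closedGM}, with one essential new feature: switching a Hall set negates two blocks, $F_i$ and $G_i$, and since $G_i$ sits in the Hall \emph{columns} while $F_i$ sits in the Hall \emph{rows}, the set $D$ must contain both the row-vertices and the column-vertices attached to the Hall set. As before, I would first use that equivalent Hadamard matrices yield isomorphic Hadamard graphs, so that $H$ may be taken in the standard form \eqref{Hallsetmatrix}. Throughout I would exploit the elementary fact that a row-vertex $v_r$ is adjacent to exactly one vertex of each pair $\{w_c,w'_c\}$, and a column-vertex $w_c$ to exactly one of each pair $\{v_r,v'_r\}$; consequently, GM-switching a row-vertex against a set of \emph{column-pairs} negates precisely the corresponding entries of that row, and dually for columns.

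Concretely, I would take the $16$ vertices
\[
D=\{v_1,\dots,v_4,v'_1,\dots,v'_4\}\cup\{w_c,w'_c : c\ \text{a Hall column}\},
\]
and, for the switched field $i$ and the switched row-group $i$, the two ``active'' pair-sets
\[
C_{\mathrm{col}}=\{w_c,w'_c : c\in\text{field }i,\ \text{non-Hall}\},\qquad
C_{\mathrm{row}}=\{v_r,v'_r : r\in\text{group }i,\ \text{non-Hall}\}.
\]
Every other field $j\neq i$ I would split into the single-type sets $\{w_c\}$ and $\{w'_c\}$, and every other group $j\neq i$ into $\{v_r\}$ and $\{v'_r\}$; this partitions all $4n$ vertices. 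The reason for splitting the non-switched fields and groups is the same device as in Theorem~\ref{closedGM}: each $F_j$ has constant rows and each $G_j=\pm F_j^{T}$ has constant columns, so a Hall row-vertex is adjacent to all or none of a single-type field set and a Hall column-vertex to all or none of a single-type group set, which keeps these blocks fixed.

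I would then check the two GM-switching hypotheses. Condition (ii) for $D$ is immediate: a Hall row-vertex has exactly half its neighbours in $C_{\mathrm{col}}$ (one per pair), $0$ or all in every single-type field set, and $0$ in every row-set; dually the Hall column-vertices have half their neighbours in $C_{\mathrm{row}}$ and $0$ or all elsewhere. Hence the only swaps performed are the Hall rows against $C_{\mathrm{col}}$ and the Hall columns against $C_{\mathrm{row}}$, which by the one-per-pair remark negate exactly $F_i$ and $G_i$. The blocks $H_4$ and all $A_{kl}$ stay fixed, since their edges lie within $D$ or between two switching sets, never between $D$ and a switching set. This is precisely the effect of switching the Hall set, so the two operations yield the same Hadamard graph.

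The bulk of the work, and the main obstacle, is condition (i): each vertex of a switching set must have the same number of neighbours in every other switching set. Two sets of the same type (both column or both row) share no edges, so the count is $0$. When one counts a column-set into a \emph{row}-pair-set (or conversely) the count is ``one per pair'', hence constant. The remaining mixed counts are governed by the blocks $A_{kl}$, which have constant row and column sums ($2$ if $k=l$, otherwise $0$); thus the number of $+1$'s in a given row or column is constant, and the $w'$/$v'$ versions use the equally constant $-1$ counts. The only non-immediate case pairs a switched pair-set with a single-type set: there the two members $w_c,w'_c$ (resp.\ $v_r,v'_r$) of a pair must contribute equally. This holds because a single-type set always comes from a non-switched index $j\neq i$, so it meets the switched pair-set through an off-diagonal block ($A_{ji}$ or $A_{ij}$) of sum $0$, which equalises the $+1$ and $-1$ counts. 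No single case is hard once the constant-sum property is invoked; the real labour is the bookkeeping across the dozen or so set-types.
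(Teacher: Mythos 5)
Your proposal is correct and follows essentially the same route as the paper: the same sixteen-vertex set $D$ (Hall rows and columns, primed and unprimed), the same two pair-sets for the switched group and field, and the same splitting of every other group and field into single-type $v/v'$ and $w/w'$ classes, with condition (i) verified via the constant row and column sums of the blocks $A_{kl}$. The only difference is that you spell out the bookkeeping (in particular the zero-sum off-diagonal blocks equalising the $w_c$/$w'_c$ counts) that the paper compresses into ``for similar reasons as in the proof of Theorem \ref{closedGM}.''
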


 \begin{proof}
     We can assume \(H\) to be of the form \eqref{Hallsetmatrix}. Let \(n\) be the order of \(H\). Consider the following partition of \(G_H\):
\begin{align*}
D &= \{v_1, \dots, v_4, v'_1, \dots, v'_4, w_1, \dots, w_4, w'_1, \dots, w'_4\},\\
C_1 &= \{v_5, \dots, v_{\frac{n}{4}+3}, v'_5, \dots, v'_{\frac{n}{4}+3}\},\\
C_2 &= \{w_5, \dots, w_{\frac{n}{4}+3}, w'_5, \dots, w'_{\frac{n}{4}+3}\},\\
C_3 &= \{v_{\frac{n}{4}+4}, \dots, v_{\frac{n}{2}+2}\},\\
C_4 &= \{v'_{\frac{n}{4}+4}, \dots, v'_{\frac{n}{2}+2}\},\\
C_5 &= \{w_{\frac{n}{4}+4}, \dots, w_{\frac{n}{2}+2}\},\\
C_6 &= \{w'_{\frac{n}{4}+4}, \dots, w'_{\frac{n}{2}+2}\},\\
C_7 &= \{v_{\frac{n}{2}+3}, \dots, v_{\frac{3n}{4}+1}\},\\
C_8 &= \{v'_{\frac{n}{2}+3}, \dots, v'_{\frac{3n}{4}+1}\},\\
C_9 &= \{w_{\frac{n}{2}+3}, \dots, w_{\frac{3n}{4}+1}\},\\
C_{10} &= \{w'_{\frac{n}{2}+3}, \dots, w'_{\frac{3n}{4}+1}\},\\
C_{11} &= \{v_{\frac{3n}{4}+2}, \dots, v_{n}\},\\
C_{12} &= \{v'_{\frac{3n}{4}+2}, \dots, v'_{n}\},\\
C_{13} &= \{w_{\frac{3n}{4}+2}, \dots, w_{n}\},\\
C_{14} &= \{w'_{\frac{3n}{4}+2}, \dots, w'_{n}\}.\end{align*}
Here \(D\) corresponds to the Hall set and Hall columns, \(C_1\) to the other rows and \(C_2\) to the other columns on which we want to switch. All other sets are defined according to the fields of the Hall set and Hall columns. 
All vertices \(w_i, w'_i\) of \(D\) are adjacent to half of the vertices of \(C_1\) and no vertices of \(C_2\). The converse is true for the vertices \(v_i, v'_i\) of \(D\). 
Because all submatrices \(A_{ij}\) in \eqref{Hallsetmatrix} have constant column and row sums, all vertices of \(C_i\) will have an equal amount of neighbours in \(C_j\) for all \(i,j \in \{3, \dots, 14\}\). All other conditions of GM-switching are satisfied for similar reasons as in the proof of Theorem \ref{closedGM}. 

Given the partition above, applying GM-switching on the Hadamard graph is equivalent to switching the Hall set of the Hadamard matrix and then creating the Hadamard graph.
 \end{proof}

 Analogously as we did in Theorem \ref{thm:converseGMclosed}, we can find conditions for the GM-partition of the Hadamard matrix in order for it to correspond to a Hall set.

\begin{theorem}\label{thm:converseGMHallset}
    Let $G_H$ be the Hadamard graph of a Hadamard matrix $H$ of order $n$. GM-switching on $G_H$ is equivalent to switching a Hall set on $H$ if the switching partition of the vertex set of $G_H$ satisfies the following conditions:
    \begin{itemize}
        \item The partition has 15 sets $\{D,C_1,C_2,\dots,C_{14}\}$.
        \item $|D| = 16, |C_1| = |C_2| = n/2-2, |C_3| = |C_4| = \dots = |C_{14}| = n/4-1$.
        \item $D$ consist of eight vertices with loops and eight without. For each vertex in $D$ with (resp. without) loops, there is another vertex in $D$ with (resp. without) loops that has no common neighbours with it.
        \item All vertices in $C_1$ have loops. For every vertex in $C_1$ there is another vertex in $C_1$ that has no common neighbours with it. Half of the vertices of $C_1$ are adjacent to the same four vertices of $D$.
        \item No vertices in $C_2$ have loops. For every vertex in $C_2$ there is another vertex in $C_2$ that has no common neighbours with it. Half of the vertices of $C_2$ are adjacent to the same four vertices of $D$.
        \item All vertices in $C_i, i \in \{2,4,7,8,11,12\}$ have loops, while no vertices in $C_j, j \in \{5,6,8,10,13,14\}$ have loops.
        \item Four vertices in $D$ are adjacent to all vertices in $C_3, C_7$ and $C_{11}$, four different vertices are adjacent to all vertices in $C_4, C_8$ and $C_{12}$. Four other vertices are adjacent to all vertices in $C_5, C_9$ and $C_{13}$. The remaining four vertices in $D$ are adjacent to all vertices in $C_6, C_{10}$ and $C_{14}$.
        \end{itemize}
\end{theorem}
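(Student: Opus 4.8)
The plan is to argue exactly as in the proof of Theorem~\ref{thm:converseGMclosed}, showing that the listed conditions force the switching partition $\{D,C_1,\dots,C_{14}\}$ to coincide, after relabelling, with the partition built in the proof of Theorem~\ref{HallGM}; once the partition is identified, GM-switching on it reproduces precisely the graph obtained by negating the blocks $F_1$ and $G_1$ of $H$ and then forming the Hadamard graph. Two features of $G_H$ do all the work. First, loops separate the bipartition: a vertex carries a loop iff it is a row-vertex $v_i$ or $v'_i$, and carries none iff it is a column-vertex $w_j$ or $w'_j$. Second, inside each side the relation ``has no common neighbour'' picks out primed/unprimed partners: for distinct rows $i\neq k$, orthogonality makes rows $i$ and $k$ agree in exactly $n/2$ positions, so $v_i$ and $v_k$ always share neighbours, and since a Hadamard matrix contains neither a repeated nor a negated row, $v'_i$ is the unique loop-vertex whose neighbourhood is disjoint from that of $v_i$; symmetrically for columns. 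These two dictionaries let me read off the combinatorial meaning of every condition.

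First I would decode $D$. From $|D|=16$, the loop-parity condition and the pairing condition, $D$ splits into eight loop-vertices forming four partner pairs $\{v_a,v'_a\},\dots$ and eight loopless vertices forming four pairs $\{w_p,w'_p\},\dots$; hence $D$ is the vertex set attached to four rows and four columns of $H$, the candidate Hall-set rows and Hall columns. Next, $C_1$ consists of loop-vertices only and has size $n/2-2$; the pairing condition groups it into $n/4-1$ row-pairs, and the requirement that half of $C_1$ be adjacent to the same four vertices of $D$ says that all these rows share one sign pattern on the four Hall columns lying in $D$ — that is, they lie in a single field. Symmetrically $C_2$ is the set of column-pairs of the same field, uniformly adjacent to four Hall-row vertices of $D$. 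The loop-parity constraints on $C_3,\dots,C_{14}$ then force these twelve sets to be the $v/v'$- and $w/w'$-halves of the three remaining fields, and the final adjacency conditions record the field structure of \eqref{Hallsetmatrix}, namely that each Hall row and each Hall column is constant on each field, so that every vertex of $D$ is adjacent to all or to none of each $C_i$, $i\in\{3,\dots,14\}$.

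It then remains to check that GM-switching on this partition is the Hall-set switch on field $1$. A Hall-row vertex of $D$, being a row-vertex, has no neighbour in the loop-set $C_1$ and exactly $\frac12|C_2|$ neighbours in $C_2$, so it swaps its adjacencies to $C_2$; translated back through the construction this is exactly the negation of $F_1$. Dually, a Hall-column vertex of $D$ has exactly $\frac12|C_1|$ neighbours in $C_1$ and swaps there, realising the negation of $G_1$. For each of $C_3,\dots,C_{14}$ every vertex of $D$ is adjacent to all or to none, so the GM threshold $\frac12|C_i|$ is never met and no other field is altered. Hence the GM-switch equals switching the Hall set on field $1$, as required.

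The genuinely new difficulty relative to Theorem~\ref{thm:converseGMclosed} is that Hall-set switching is two-sided: $D$ must carry both row-vertices (to realise the negation of $F_1$) and column-vertices (to realise the negation of $G_1$), and the switched field is split between the row-set $C_1$ and the column-set $C_2$, linked by the transpose relation $G_1=F_1^{T}$. The delicate point is to verify that the half-adjacency conditions single out one field while leaving the other three at the extreme values $0$ or full, so that exactly $C_1$ and $C_2$ are switched; I expect this bookkeeping — tracking the $v/v'$ and $w/w'$ roles across all fourteen parts while matching each $F_i$ to its $G_i$ — to be the most error-prone, though conceptually routine, step once the loop and common-neighbour dictionaries are in hand.
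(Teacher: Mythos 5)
Your proposal is correct and follows essentially the same route as the paper: use loops to separate row-vertices from column-vertices, use the ``no common neighbour'' relation to pair $v_i$ with $v'_i$ and $w_j$ with $w'_j$, and thereby identify $D$ with the Hall rows and columns, $C_1,C_2$ with the row- and column-halves of the switched field, and $C_3,\dots,C_{14}$ with the remaining fields, exactly as in the partition of Theorem~\ref{HallGM}. Your final paragraph verifying that the GM-switch then negates precisely $F_1$ and $G_1$ (with the other fields at the extreme values $0$ or full) is a welcome explicit check that the paper leaves implicit by deferring to Theorem~\ref{HallGM}.
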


\begin{proof}
    The proof of this theorem is similar to that of Theorem \ref{thm:converseGMclosed}. We know a switching partition of the Hadamard graph corresponding to a Hall set of the Hadamard matrix resembles the partition $\{D,C_1,\dots,C_{14}\}$ shown in the proof of Theorem \ref{HallGM}. In this partition all sets are important to define the Hall set, so we need exactly 15 sets.
    All vertices that are labeled $v_i, v'_i, i \in \{1,\dots,n\}$ have loops and all vertices labeled $w_j, w'_j, j \in \{1,\dots,n\}$ have no loops.

    The set $D$ corresponds to the Hall set rows and columns and therefore contains eight vertices with loops for the rows and eight vertices without loops for the columns. For every $v$ in $D$, the $v'$ corresponding to the same row must also be in $D$. This is the only vertex with a loop that has no common neighbours with $v$. Similarly, for every $w$ in $D$, the $w'$ corresponding to the same column as $w$ must also be in $D$. This is the only vertex without loops that has no common neighbours with $w$.

    We want $C_1$ and $C_2$ to correspond respectively to field of the Hall set rows and columns that we want to switch on. Therefore $C_1$ must contain $n/2-2$ vertices with loops. For every vertex $v$ in $C_1$, the corresponding $v'$ must also be in $C_1$. Set $C_2$ needs $n/2-2$ loopless vertices and for every $w$ in $C_2$, the corresponding $w'$ must also be in $C_2$. In these fields all Hall set rows (resp. columns) have the same entry sequence (or the negation of it), so half of the vertices in $C_1$ (resp. $C_2$) are adjacent to the same four vertices in $D$.

    All other sets should correspond to the different fields in either the Hall set rows or columns. We can label the sets such that $C_i, i \in \{3,4,7,8,11,12\}$ correspond to the to the different fields in the rows and $C_j, j \in \{5,6,9,10,13,14\}$ to the different fields in the columns. 
    Analogously to the proof of Theorem \ref{thm:converseGMclosed}, we get that four vertices in $D$ are adjacent to all vertices in $C_3, C_7$ and $C_{11}$, four different vertices are adjacent to all vertices in $C_4, C_8$ and $C_{12}$. Four other vertices are adjacent to all vertices in $C_5, C_9$ and $C_{13}$. The remaining four vertices in $D$ are adjacent to all vertices in $C_6, C_{10}$ and $C_{14}$.
\end{proof}

\subsection{Non-equivalence of Hadamard matrices constructed using Orrick's switchings}\label{noneqswitch}

\subsubsection{Switching a closed quadruple}\label{noneqswitchclosed}

In \cite{Orrick2008} Orrick wrote:

``It appears that when \(n > 8\), switching [a closed quadruple] always produces a Hadamard matrix that is inequivalent to the original Hadamard matrix.''
   
However, no proof nor evidence of why this is the case is provided in \cite{Orrick2008}. We confirmed this statement computationally for all Hadamard matrices of order 16 and 24. There are five equivalence classes of Hadamard matrices of order 16 (see Appendix \ref{H16}) and 60 equivalence classes of Hadamard matrices of order 24. For order 16, the Smith normal form of the Hadamard matrix was sufficient for proving non-equivalence. However, for order 24 the Smith normal form of the Hadamard matrices is always the same, thus a more sensitive method is needed. Of the 60 equivalence classes of Hadamard matrices of order 24, 2 have no closed quadruple. Of the remaining 58 classes, 44 had different 4-profiles, leaving only 14 to check. To prove non-equivalence of the remaining matrices after switching, we computed the corresponding Hadamard graphs and checked if they were isomorphic, which they were not. This method was introduced in Section \ref{graphhad}.

\subsubsection{Switching a Hall set}\label{noneqswitchHall}

For switching a Hall set it is not true that switching always delivers inequivalent Hadamard matrices. As Orrick \cite{Orrick2008} stated:

``Examples are known where switching a Hall set in a Hadamard matrix \(H\) produces a matrix equivalent to \(H\). In general, however, one obtains an inequivalent matrix.''

One of these examples is the Hadamard matrix \(H_{36}\) of order 36 in the Appendix \ref{H36}. Rows and columns 1, 4, 19, and 22 form a Hall set. After switching, the corresponding Hadamard graphs are isomorphic, and thus the Hadamard matrices are equivalent.

\section{Concluding remarks}\label{sec:concremarks}


The main focus of the current study has been on investigating a relationship between switching methods for graphs and Hadamard matrices. In particular, we focused on the method to construct cospectral graphs by Godsil and McKay \cite{Godsil1982} (GM-switching) and two methods for constructing Hadamard matrices by Orrick \cite{Orrick2008} (switching a closed quadruple, switching a Hall set), and showed an equivalence between them. This idea was mentioned in 2019 by the first author to Orrick \cite{Aidaprivatecommunication}, who was unaware of GM-switching and therefore of a potential connection between the methods. 

Some variations and extensions of GM-switching have been recently proposed in the literature  \cite{Abiad2012,wang2019cospectral, qiu2020theorem,ABS2024switching}. It would be interesting to investigate if similar equivalences of such switching operations can be shown with other methods to construct Hadamard matrices. 


It would also be interesting to find a theoretical proof of \linebreak non-equivalence for the Hadamard matrices obtained by using the switching a closed quadruple method, addressing  a problem proposed by Orrick \cite{Orrick2008}. 



\subsection*{Acknowledgements} 
Aida Abiad is supported by NWO (Dutch Research Council) through the grant VI.Vidi.213.085. The authors would like to thank Ronan Egan for bringing Orrick's work to their attention, and Robin Simoens for a careful reading of this manuscript. 
 
\bibliographystyle{abbrv}
\bibliography{sources}

\newpage
\section{Appendix}

We include all Hadamard matrices of order 1 to 20 up to equivalence, along with some equivalent Hadamard matrices of order 36, used as examples in Section \ref{noneqswitchHall}. For orders \(\leq 12\) there is only one equivalence class per order. There are 5 equivalence classes of Hadamard matrices of order 16 and 3 of order 20. For the 60 inequivalent Hadamard matrices of order 24 and Hadamard matrices of larger sizes, see Sloane's database \cite{Sloane}.

\small

\subsection{Order 1 to 12}
\begin{equation*}
    \begin{pmatrix}
        1
    \end{pmatrix}
    \qquad
    \begin{pmatrix}
        1&1\\
        1&-
    \end{pmatrix}
    \qquad
    \begin{pmatrix}
        1&1&1&1\\
        1&1&-&-\\
        1&-&1&-\\
        1&-&-&1
    \end{pmatrix}
    \qquad
    \begin{pmatrix}
        1&1&1&1&1&1&1&1\\
        1&1&-&-&1&1&-&-\\
        1&-&1&-&1&-&1&-\\
        1&-&-&1&1&-&-&1\\
        1&1&1&1&-&-&-&-\\
        1&1&-&-&-&-&1&1\\
        1&-&1&-&-&1&-&1\\
        1&-&-&1&-&1&1&-\\
    \end{pmatrix}
    \end{equation*}

    \begin{equation*}
    \begin{pmatrix}
        1&-&-&-&1&1&1&1&1&1&-&-\\
    -&1&-&-&1&1&-&-&-&-&-&-\\
    -&-&1&-&1&1&-&-&1&1&1&1\\
    -&-&-&1&1&1&1&1&-&-&1&1\\
    1&1&1&1&1&1&-&1&1&-&-&1\\
    1&1&1&1&1&1&1&-&-&1&1&-\\
    -&1&1&-&-&1&1&1&1&-&1&-\\
    -&1&1&-&1&-&1&1&-&1&-&1\\
    -&1&-&1&1&-&-&1&1&1&1&-\\
    -&1&-&1&-&1&1&-&1&1&-&1\\
    1&1&-&-&-&1&-&1&-&1&1&1\\
    1&1&-&-&1&-&1&-&1&-&1&1\\
    \end{pmatrix}
\end{equation*}

\subsection{Order 16}\label{H16}
\begin{align*}
H_{16.0} = &
    \begin{pmatrix} 
    1 & 1 & 1 & 1 & 1 & 1 & 1 & 1 & 1 & 1 & 1 & 1 & 1 & 1 & 1 & 1 \\
    1 & - & 1 & - & 1 & - & 1 & - & 1 & - & 1 & - & 1 & - & 1 & - \\
    1 & 1 & - & - & 1 & 1 & - & - & 1 & 1 & - & - & 1 & 1 & - & - \\
    1 & - & - & 1 & 1 & - & - & 1 & 1 & - & - & 1 & 1 & - & - & 1 \\
    1 & 1 & 1 & 1 & - & - & - & - & 1 & 1 & 1 & 1 & - & - & - & - \\
    1 & - & 1 & - & - & 1 & - & 1 & 1 & - & 1 & - & - & 1 & - & 1 \\
    1 & 1 & - & - & - & - & 1 & 1 & 1 & 1 & - & - & - & - & 1 & 1 \\
    1 & - & - & 1 & - & 1 & 1 & - & 1 & - & - & 1 & - & 1 & 1 & - \\
    1 & 1 & 1 & 1 & 1 & 1 & 1 & 1 & - & - & - & - & - & - & - & - \\
    1 & - & 1 & - & 1 & - & 1 & - & - & 1 & - & 1 & - & 1 & - & 1 \\
    1 & 1 & - & - & 1 & 1 & - & - & - & - & 1 & 1 & - & - & 1 & 1 \\
    1 & - & - & 1 & 1 & - & - & 1 & - & 1 & 1 & - & - & 1 & 1 & - \\
    1 & 1 & 1 & 1 & - & - & - & - & - & - & - & - & 1 & 1 & 1 & 1 \\
    1 & - & 1 & - & - & 1 & - & 1 & - & 1 & - & 1 & 1 & - & 1 & - \\
    1 & 1 & - & - & - & - & 1 & 1 & - & - & 1 & 1 & 1 & 1 & - & - \\
    1 & - & - & 1 & - & 1 & 1 & - & - & 1 & 1 & - & 1 & - & - & 1
    \end{pmatrix}\\
\\
H_{16.1} = &
    \begin{pmatrix}
        1 & 1 & 1 & 1 & 1 & 1 & 1 & 1 & 1 & 1 & 1 & 1 & 1 & 1 & 1 & 1 \\
        1 & - & 1 & - & 1 & - & 1 & - & 1 & - & 1 & - & 1 & - & 1 & - \\
        1 & 1 & - & - & 1 & 1 & - & - & 1 & 1 & - & - & 1 & 1 & - & - \\
        1 & - & - & 1 & 1 & - & - & 1 & 1 & - & - & 1 & 1 & - & - & 1 \\
        1 & 1 & 1 & 1 & - & - & - & - & 1 & 1 & 1 & 1 & - & - & - & - \\
        1 & - & 1 & - & - & 1 & - & 1 & 1 & - & 1 & - & - & 1 & - & 1 \\
        1 & 1 & - & - & - & - & 1 & 1 & 1 & 1 & - & - & - & - & 1 & 1 \\
        1 & - & - & 1 & - & 1 & 1 & - & 1 & - & - & 1 & - & 1 & 1 & - \\
        1 & 1 & 1 & 1 & 1 & 1 & 1 & 1 & - & - & - & - & - & - & - & - \\
        1 & - & 1 & - & 1 & - & - & 1 & - & 1 & - & 1 & - & 1 & 1 & - \\
        1 & 1 & - & - & 1 & 1 & - & - & - & - & 1 & 1 & - & - & 1 & 1 \\
        1 & - & - & 1 & 1 & - & 1 & - & - & 1 & 1 & - & - & 1 & - & 1 \\
        1 & 1 & 1 & 1 & - & - & - & - & - & - & - & - & 1 & 1 & 1 & 1 \\
        1 & - & 1 & - & - & 1 & 1 & - & - & 1 & - & 1 & 1 & - & - & 1 \\
        1 & 1 & - & - & - & - & 1 & 1 & - & - & 1 & 1 & 1 & 1 & - & - \\
        1 & - & - & 1 & - & 1 & - & 1 & - & 1 & 1 & - & 1 & - & 1 & -\\
    \end{pmatrix}\\
\end{align*}
\begin{align*}
H_{16.2} = &
    \begin{pmatrix}
        1 & 1 & 1 & 1 & 1 & 1 & 1 & 1 & 1 & 1 & 1 & 1 & 1 & 1 & 1 & 1 \\
        1 & - & 1 & - & 1 & - & 1 & - & 1 & - & 1 & - & 1 & - & 1 & - \\
        1 & 1 & - & - & 1 & 1 & - & - & 1 & 1 & - & - & 1 & 1 & - & - \\
        1 & - & - & 1 & 1 & - & - & 1 & 1 & - & - & 1 & 1 & - & - & 1 \\
        1 & 1 & 1 & 1 & - & - & - & - & 1 & 1 & 1 & 1 & - & - & - & - \\
        1 & - & 1 & - & - & 1 & - & 1 & 1 & - & 1 & - & - & 1 & - & 1 \\
        1 & 1 & - & - & - & - & 1 & 1 & 1 & 1 & - & - & - & - & 1 & 1 \\
        1 & - & - & 1 & - & 1 & 1 & - & 1 & - & - & 1 & - & 1 & 1 & - \\
        1 & 1 & 1 & 1 & 1 & 1 & 1 & 1 & - & - & - & - & - & - & - & - \\
        1 & 1 & 1 & 1 & - & - & - & - & - & - & - & - & 1 & 1 & 1 & 1 \\
        1 & 1 & - & - & 1 & - & 1 & - & - & - & 1 & 1 & - & 1 & - & 1 \\
        1 & 1 & - & - & - & 1 & - & 1 & - & - & 1 & 1 & 1 & - & 1 & - \\
        1 & - & 1 & - & 1 & - & - & 1 & - & 1 & - & 1 & - & 1 & 1 & - \\
        1 & - & 1 & - & - & 1 & 1 & - & - & 1 & - & 1 & 1 & - & - & 1 \\
        1 & - & - & 1 & 1 & 1 & - & - & - & 1 & 1 & - & - & - & 1 & 1 \\
        1 & - & - & 1 & - & - & 1 & 1 & - & 1 & 1 & - & 1 & 1 & - & -
    \end{pmatrix}\\
\\
H_{16.3}=&
    \begin{pmatrix}
        1 & 1 & 1 & 1 & 1 & 1 & 1 & 1 & 1 & 1 & 1 & 1 & 1 & 1 & 1 & 1 \\
        1 & - & 1 & - & 1 & - & 1 & - & 1 & - & 1 & - & 1 & - & 1 & - \\
        1 & 1 & - & - & 1 & 1 & - & - & 1 & 1 & - & - & 1 & 1 & - & - \\
        1 & - & - & 1 & 1 & - & - & 1 & 1 & - & - & 1 & 1 & - & - & 1 \\
        1 & 1 & 1 & 1 & - & - & - & - & 1 & 1 & 1 & 1 & - & - & - & - \\
        1 & - & 1 & - & - & 1 & - & 1 & 1 & - & 1 & - & - & 1 & - & 1 \\
        1 & 1 & - & - & - & - & 1 & 1 & 1 & 1 & - & - & - & - & 1 & 1 \\
        1 & - & - & 1 & - & 1 & 1 & - & 1 & - & - & 1 & - & 1 & 1 & - \\
        1 & 1 & 1 & 1 & 1 & 1 & 1 & 1 & - & - & - & - & - & - & - & - \\
        1 & 1 & 1 & - & 1 & - & - & - & - & - & - & 1 & - & 1 & 1 & 1 \\
        1 & 1 & - & 1 & - & - & - & 1 & - & - & 1 & - & 1 & 1 & 1 & - \\
        1 & 1 & - & - & - & 1 & 1 & - & - & - & 1 & 1 & 1 & - & - & 1 \\
        1 & - & 1 & 1 & - & 1 & - & - & - & 1 & - & - & 1 & - & 1 & 1 \\
        1 & - & 1 & - & - & - & 1 & 1 & - & 1 & - & 1 & 1 & 1 & - & - \\
        1 & - & - & 1 & 1 & - & 1 & - & - & 1 & 1 & - & - & 1 & - & 1 \\
        1 & - & - & - & 1 & 1 & - & 1 & - & 1 & 1 & 1 & - & - & 1 & -
    \end{pmatrix}\\
\\
H_{16.4}=&
    \begin{pmatrix}
        1 & 1 & 1 & 1 & 1 & 1 & 1 & 1 & 1 & 1 & 1 & 1 & 1 & 1 & 1 & 1 \\
        1 & - & 1 & - & 1 & - & 1 & - & 1 & 1 & 1 & 1 & - & - & - & - \\
        1 & 1 & - & - & 1 & 1 & - & - & 1 & 1 & - & - & 1 & 1 & - & - \\
        1 & - & - & 1 & 1 & - & - & 1 & 1 & - & 1 & - & 1 & - & 1 & - \\
        1 & 1 & 1 & 1 & - & - & - & - & 1 & 1 & - & - & - & - & 1 & 1 \\
        1 & - & 1 & - & - & 1 & - & 1 & 1 & - & - & 1 & 1 & - & - & 1 \\
        1 & 1 & - & - & - & - & 1 & 1 & 1 & - & - & 1 & - & 1 & 1 & - \\
        1 & - & - & 1 & - & 1 & 1 & - & 1 & - & 1 & - & - & 1 & - & 1 \\
        1 & 1 & 1 & 1 & 1 & 1 & 1 & 1 & - & - & - & - & - & - & - & - \\
        1 & - & 1 & - & 1 & - & 1 & - & - & - & - & - & 1 & 1 & 1 & 1 \\
        1 & 1 & - & - & 1 & 1 & - & - & - & - & 1 & 1 & - & - & 1 & 1 \\
        1 & - & - & 1 & 1 & - & - & 1 & - & 1 & - & 1 & - & 1 & - & 1 \\
        1 & 1 & 1 & 1 & - & - & - & - & - & - & 1 & 1 & 1 & 1 & - & - \\
        1 & - & 1 & - & - & 1 & - & 1 & - & 1 & 1 & - & - & 1 & 1 & - \\
        1 & 1 & - & - & - & - & 1 & 1 & - & 1 & 1 & - & 1 & - & - & 1 \\
        1 & - & - & 1 & - & 1 & 1 & - & - & 1 & - & 1 & 1 & - & 1 & -
    \end{pmatrix}
\end{align*}

\subsection{Order 20} \label{H20}
\begin{align*}
H_{20.0} = &
    \begin{pmatrix}
        1 & - & - & - & - & - & - & - & - & - & - & - & - & - & - & - & - & - & - & - \\
        1 & 1 & - & 1 & 1 & - & - & - & - & 1 & - & 1 & - & 1 & 1 & 1 & 1 & - & - & 1 \\
        1 & 1 & 1 & - & 1 & 1 & - & - & - & - & 1 & - & 1 & - & 1 & 1 & 1 & 1 & - & - \\
        1 & - & 1 & 1 & - & 1 & 1 & - & - & - & - & 1 & - & 1 & - & 1 & 1 & 1 & 1 & - \\
        1 & - & - & 1 & 1 & - & 1 & 1 & - & - & - & - & 1 & - & 1 & - & 1 & 1 & 1 & 1 \\
        1 & 1 & - & - & 1 & 1 & - & 1 & 1 & - & - & - & - & 1 & - & 1 & - & 1 & 1 & 1 \\
        1 & 1 & 1 & - & - & 1 & 1 & - & 1 & 1 & - & - & - & - & 1 & - & 1 & - & 1 & 1 \\
        1 & 1 & 1 & 1 & - & - & 1 & 1 & - & 1 & 1 & - & - & - & - & 1 & - & 1 & - & 1 \\
        1 & 1 & 1 & 1 & 1 & - & - & 1 & 1 & - & 1 & 1 & - & - & - & - & 1 & - & 1 & - \\
        1 & - & 1 & 1 & 1 & 1 & - & - & 1 & 1 & - & 1 & 1 & - & - & - & - & 1 & - & 1 \\
        1 & 1 & - & 1 & 1 & 1 & 1 & - & - & 1 & 1 & - & 1 & 1 & - & - & - & - & 1 & - \\
        1 & - & 1 & - & 1 & 1 & 1 & 1 & - & - & 1 & 1 & - & 1 & 1 & - & - & - & - & 1 \\
        1 & 1 & - & 1 & - & 1 & 1 & 1 & 1 & - & - & 1 & 1 & - & 1 & 1 & - & - & - & - \\
        1 & - & 1 & - & 1 & - & 1 & 1 & 1 & 1 & - & - & 1 & 1 & - & 1 & 1 & - & - & - \\
        1 & - & - & 1 & - & 1 & - & 1 & 1 & 1 & 1 & - & - & 1 & 1 & - & 1 & 1 & - & - \\
        1 & - & - & - & 1 & - & 1 & - & 1 & 1 & 1 & 1 & - & - & 1 & 1 & - & 1 & 1 & - \\
        1 & - & - & - & - & 1 & - & 1 & - & 1 & 1 & 1 & 1 & - & - & 1 & 1 & - & 1 & 1 \\
        1 & 1 & - & - & - & - & 1 & - & 1 & - & 1 & 1 & 1 & 1 & - & - & 1 & 1 & - & 1 \\
        1 & 1 & 1 & - & - & - & - & 1 & - & 1 & - & 1 & 1 & 1 & 1 & - & - & 1 & 1 & - \\
        1 & - & 1 & 1 & - & - & - & - & 1 & - & 1 & - & 1 & 1 & 1 & 1 & - & - & 1 & 1
    \end{pmatrix}\\
\\
    H_{20.1} = &
    \begin{pmatrix}
        1 & - & - & - & - & 1 & - & - & - & - & 1 & 1 & - & - & 1 & 1 & - & 1 & 1 & - \\
- & 1 & - & - & - & - & 1 & - & - & - & 1 & 1 & 1 & - & - & - & 1 & - & 1 & 1 \\
- & - & 1 & - & - & - & - & 1 & - & - & - & 1 & 1 & 1 & - & 1 & - & 1 & - & 1 \\
- & - & - & 1 & - & - & - & - & 1 & - & - & - & 1 & 1 & 1 & 1 & 1 & - & 1 & - \\
- & - & - & - & 1 & - & - & - & - & 1 & 1 & - & - & 1 & 1 & - & 1 & 1 & - & 1 \\
- & 1 & 1 & 1 & 1 & 1 & - & - & - & - & - & 1 & - & - & 1 & 1 & 1 & - & - & 1 \\
1 & - & 1 & 1 & 1 & - & 1 & - & - & - & 1 & - & 1 & - & - & 1 & 1 & 1 & - & - \\
1 & 1 & - & 1 & 1 & - & - & 1 & - & - & - & 1 & - & 1 & - & - & 1 & 1 & 1 & - \\
1 & 1 & 1 & - & 1 & - & - & - & 1 & - & - & - & 1 & - & 1 & - & - & 1 & 1 & 1 \\
1 & 1 & 1 & 1 & - & - & - & - & - & 1 & 1 & - & - & 1 & - & 1 & - & - & 1 & 1 \\
- & - & 1 & 1 & - & 1 & - & 1 & 1 & - & 1 & - & - & - & - & - & 1 & 1 & 1 & 1 \\
- & - & - & 1 & 1 & - & 1 & - & 1 & 1 & - & 1 & - & - & - & 1 & - & 1 & 1 & 1 \\
1 & - & - & - & 1 & 1 & - & 1 & - & 1 & - & - & 1 & - & - & 1 & 1 & - & 1 & 1 \\
1 & 1 & - & - & - & 1 & 1 & - & 1 & - & - & - & - & 1 & - & 1 & 1 & 1 & - & 1 \\
- & 1 & 1 & - & - & - & 1 & 1 & - & 1 & - & - & - & - & 1 & 1 & 1 & 1 & 1 & - \\
- & 1 & - & - & 1 & - & - & 1 & 1 & - & 1 & - & - & - & - & 1 & - & - & - & - \\
1 & - & 1 & - & - & - & - & - & 1 & 1 & - & 1 & - & - & - & - & 1 & - & - & - \\
- & 1 & - & 1 & - & 1 & - & - & - & 1 & - & - & 1 & - & - & - & - & 1 & - & - \\
- & - & 1 & - & 1 & 1 & 1 & - & - & - & - & - & - & 1 & - & - & - & - & 1 & - \\
1 & - & - & 1 & - & - & 1 & 1 & - & - & - & - & - & - & 1 & - & - & - & - & 1
    \end{pmatrix}\\
\end{align*}
\begin{align*}
    H_{20.2} =&
    \begin{pmatrix}
        1 & 1 & 1 & 1 & 1 & 1 & 1 & 1 & 1 & 1 & 1 & 1 & 1 & 1 & 1 & 1 & 1 & 1 & 1 & 1 \\
1 & 1 & 1 & 1 & 1 & 1 & 1 & 1 & 1 & 1 & - & - & - & - & - & - & - & - & - & - \\
1 & 1 & 1 & 1 & 1 & - & - & - & - & - & 1 & 1 & 1 & 1 & 1 & - & - & - & - & - \\
1 & 1 & 1 & 1 & - & 1 & - & - & - & - & 1 & - & - & - & - & 1 & 1 & 1 & 1 & - \\
1 & 1 & 1 & - & - & - & 1 & 1 & - & - & - & 1 & 1 & - & - & 1 & 1 & - & - & 1 \\
1 & 1 & - & 1 & - & - & 1 & 1 & - & - & - & - & - & 1 & 1 & - & - & 1 & 1 & 1 \\
1 & 1 & - & - & 1 & 1 & - & - & 1 & - & - & 1 & - & 1 & - & 1 & - & 1 & - & 1 \\
1 & 1 & - & - & 1 & 1 & - & - & - & 1 & - & - & 1 & - & 1 & - & 1 & - & 1 & 1 \\
1 & 1 & - & - & - & - & 1 & - & 1 & 1 & 1 & 1 & - & - & 1 & - & 1 & 1 & - & - \\
1 & 1 & - & - & - & - & - & 1 & 1 & 1 & 1 & - & 1 & 1 & - & 1 & - & - & 1 & - \\
1 & - & 1 & 1 & - & - & - & - & 1 & 1 & - & 1 & - & - & 1 & 1 & - & - & 1 & 1 \\
1 & - & 1 & - & 1 & - & 1 & - & 1 & - & 1 & - & 1 & - & - & - & - & 1 & 1 & 1 \\
1 & - & 1 & - & 1 & - & - & 1 & - & 1 & - & 1 & - & 1 & - & - & 1 & 1 & 1 & - \\
1 & - & 1 & - & - & 1 & 1 & - & - & 1 & - & - & 1 & 1 & 1 & 1 & - & 1 & - & - \\
1 & - & 1 & - & - & 1 & - & 1 & 1 & - & 1 & - & - & 1 & 1 & - & 1 & - & - & 1 \\
1 & - & - & 1 & 1 & - & 1 & - & - & 1 & 1 & - & - & 1 & - & 1 & 1 & - & - & 1 \\
1 & - & - & 1 & 1 & - & - & 1 & 1 & - & - & - & 1 & - & 1 & 1 & 1 & 1 & - & - \\
1 & - & - & 1 & - & 1 & 1 & - & 1 & - & - & 1 & 1 & 1 & - & - & 1 & - & 1 & - \\
1 & - & - & 1 & - & 1 & - & 1 & - & 1 & 1 & 1 & 1 & - & - & - & - & 1 & - & 1 \\
1 & - & - & - & 1 & 1 & 1 & 1 & - & - & 1 & 1 & - & - & 1 & 1 & - & - & 1 & -
    \end{pmatrix}
\end{align*}

\subsection{Order 36}\label{H36}
The matrix \(H'_{36}\) is the matrix \(H_{36}\) after switching a Hall set.
\begin{align*}
    H_{36} = & \left( \begin{smallmatrix}
        1 & - & - & - & - & - & - & 1 & 1 & 1 & 1 & - & - & 1 & 1 & 1 & - & 1 & - & 1 & 1 & - & 1 & 1 & - & 1 & - & 1 & 1 & 1 & - & - & 1 & 1 & 1 & 1 \\
- & 1 & - & - & - & - & - & - & 1 & 1 & 1 & 1 & 1 & - & 1 & 1 & 1 & - & 1 & - & 1 & 1 & - & 1 & 1 & - & 1 & - & 1 & 1 & 1 & - & - & 1 & 1 & 1 \\
- & - & 1 & - & - & - & 1 & - & - & 1 & 1 & 1 & - & 1 & - & 1 & 1 & 1 & 1 & 1 & - & 1 & 1 & - & 1 & 1 & - & 1 & - & 1 & 1 & 1 & - & - & 1 & 1 \\
- & - & - & 1 & - & - & 1 & 1 & - & - & 1 & 1 & 1 & - & 1 & - & 1 & 1 & - & 1 & 1 & - & 1 & 1 & 1 & 1 & 1 & - & 1 & - & 1 & 1 & 1 & - & - & 1 \\
- & - & - & - & 1 & - & 1 & 1 & 1 & - & - & 1 & 1 & 1 & - & 1 & - & 1 & 1 & - & 1 & 1 & - & 1 & - & 1 & 1 & 1 & - & 1 & 1 & 1 & 1 & 1 & - & - \\
- & - & - & - & - & 1 & 1 & 1 & 1 & 1 & - & - & 1 & 1 & 1 & - & 1 & - & 1 & 1 & - & 1 & 1 & - & 1 & - & 1 & 1 & 1 & - & - & 1 & 1 & 1 & 1 & - \\
- & - & 1 & 1 & 1 & 1 & 1 & - & - & - & - & - & - & 1 & 1 & 1 & 1 & - & - & 1 & 1 & 1 & - & 1 & - & 1 & 1 & - & 1 & 1 & - & 1 & - & 1 & 1 & 1 \\
1 & - & - & 1 & 1 & 1 & - & 1 & - & - & - & - & - & - & 1 & 1 & 1 & 1 & 1 & - & 1 & 1 & 1 & - & 1 & - & 1 & 1 & - & 1 & 1 & - & 1 & - & 1 & 1 \\
1 & 1 & - & - & 1 & 1 & - & - & 1 & - & - & - & 1 & - & - & 1 & 1 & 1 & - & 1 & - & 1 & 1 & 1 & 1 & 1 & - & 1 & 1 & - & 1 & 1 & - & 1 & - & 1 \\
1 & 1 & 1 & - & - & 1 & - & - & - & 1 & - & - & 1 & 1 & - & - & 1 & 1 & 1 & - & 1 & - & 1 & 1 & - & 1 & 1 & - & 1 & 1 & 1 & 1 & 1 & - & 1 & - \\
1 & 1 & 1 & 1 & - & - & - & - & - & - & 1 & - & 1 & 1 & 1 & - & - & 1 & 1 & 1 & - & 1 & - & 1 & 1 & - & 1 & 1 & - & 1 & - & 1 & 1 & 1 & - & 1 \\
- & 1 & 1 & 1 & 1 & - & - & - & - & - & - & 1 & 1 & 1 & 1 & 1 & - & - & 1 & 1 & 1 & - & 1 & - & 1 & 1 & - & 1 & 1 & - & 1 & - & 1 & 1 & 1 & - \\
- & 1 & - & 1 & 1 & 1 & - & - & 1 & 1 & 1 & 1 & 1 & - & - & - & - & - & - & 1 & 1 & 1 & 1 & - & - & 1 & 1 & 1 & - & 1 & - & 1 & 1 & - & 1 & 1 \\
1 & - & 1 & - & 1 & 1 & 1 & - & - & 1 & 1 & 1 & - & 1 & - & - & - & - & - & - & 1 & 1 & 1 & 1 & 1 & - & 1 & 1 & 1 & - & 1 & - & 1 & 1 & - & 1 \\
1 & 1 & - & 1 & - & 1 & 1 & 1 & - & - & 1 & 1 & - & - & 1 & - & - & - & 1 & - & - & 1 & 1 & 1 & - & 1 & - & 1 & 1 & 1 & 1 & 1 & - & 1 & 1 & - \\
1 & 1 & 1 & - & 1 & - & 1 & 1 & 1 & - & - & 1 & - & - & - & 1 & - & - & 1 & 1 & - & - & 1 & 1 & 1 & - & 1 & - & 1 & 1 & - & 1 & 1 & - & 1 & 1 \\
- & 1 & 1 & 1 & - & 1 & 1 & 1 & 1 & 1 & - & - & - & - & - & - & 1 & - & 1 & 1 & 1 & - & - & 1 & 1 & 1 & - & 1 & - & 1 & 1 & - & 1 & 1 & - & 1 \\
1 & - & 1 & 1 & 1 & - & - & 1 & 1 & 1 & 1 & - & - & - & - & - & - & 1 & 1 & 1 & 1 & 1 & - & - & 1 & 1 & 1 & - & 1 & - & 1 & 1 & - & 1 & 1 & - \\
- & 1 & 1 & - & 1 & 1 & - & 1 & - & 1 & 1 & 1 & - & - & 1 & 1 & 1 & 1 & 1 & - & - & - & - & - & - & 1 & 1 & 1 & 1 & - & - & 1 & 1 & 1 & - & 1 \\
1 & - & 1 & 1 & - & 1 & 1 & - & 1 & - & 1 & 1 & 1 & - & - & 1 & 1 & 1 & - & 1 & - & - & - & - & - & - & 1 & 1 & 1 & 1 & 1 & - & 1 & 1 & 1 & - \\
1 & 1 & - & 1 & 1 & - & 1 & 1 & - & 1 & - & 1 & 1 & 1 & - & - & 1 & 1 & - & - & 1 & - & - & - & 1 & - & - & 1 & 1 & 1 & - & 1 & - & 1 & 1 & 1 \\
- & 1 & 1 & - & 1 & 1 & 1 & 1 & 1 & - & 1 & - & 1 & 1 & 1 & - & - & 1 & - & - & - & 1 & - & - & 1 & 1 & - & - & 1 & 1 & 1 & - & 1 & - & 1 & 1 \\
1 & - & 1 & 1 & - & 1 & - & 1 & 1 & 1 & - & 1 & 1 & 1 & 1 & 1 & - & - & - & - & - & - & 1 & - & 1 & 1 & 1 & - & - & 1 & 1 & 1 & - & 1 & - & 1 \\
1 & 1 & - & 1 & 1 & - & 1 & - & 1 & 1 & 1 & - & - & 1 & 1 & 1 & 1 & - & - & - & - & - & - & 1 & 1 & 1 & 1 & 1 & - & - & 1 & 1 & 1 & - & 1 & - \\
- & 1 & 1 & 1 & - & 1 & - & 1 & 1 & - & 1 & 1 & - & 1 & - & 1 & 1 & 1 & - & - & 1 & 1 & 1 & 1 & 1 & - & - & - & - & - & - & 1 & 1 & 1 & 1 & - \\
1 & - & 1 & 1 & 1 & - & 1 & - & 1 & 1 & - & 1 & 1 & - & 1 & - & 1 & 1 & 1 & - & - & 1 & 1 & 1 & - & 1 & - & - & - & - & - & - & 1 & 1 & 1 & 1 \\
- & 1 & - & 1 & 1 & 1 & 1 & 1 & - & 1 & 1 & - & 1 & 1 & - & 1 & - & 1 & 1 & 1 & - & - & 1 & 1 & - & - & 1 & - & - & - & 1 & - & - & 1 & 1 & 1 \\
1 & - & 1 & - & 1 & 1 & - & 1 & 1 & - & 1 & 1 & 1 & 1 & 1 & - & 1 & - & 1 & 1 & 1 & - & - & 1 & - & - & - & 1 & - & - & 1 & 1 & - & - & 1 & 1 \\
1 & 1 & - & 1 & - & 1 & 1 & - & 1 & 1 & - & 1 & - & 1 & 1 & 1 & - & 1 & 1 & 1 & 1 & 1 & - & - & - & - & - & - & 1 & - & 1 & 1 & 1 & - & - & 1 \\
1 & 1 & 1 & - & 1 & - & 1 & 1 & - & 1 & 1 & - & 1 & - & 1 & 1 & 1 & - & - & 1 & 1 & 1 & 1 & - & - & - & - & - & - & 1 & 1 & 1 & 1 & 1 & - & - \\
- & 1 & 1 & 1 & 1 & - & - & 1 & 1 & 1 & - & 1 & - & 1 & 1 & - & 1 & 1 & - & 1 & - & 1 & 1 & 1 & - & - & 1 & 1 & 1 & 1 & 1 & - & - & - & - & - \\
- & - & 1 & 1 & 1 & 1 & 1 & - & 1 & 1 & 1 & - & 1 & - & 1 & 1 & - & 1 & 1 & - & 1 & - & 1 & 1 & 1 & - & - & 1 & 1 & 1 & - & 1 & - & - & - & - \\
1 & - & - & 1 & 1 & 1 & - & 1 & - & 1 & 1 & 1 & 1 & 1 & - & 1 & 1 & - & 1 & 1 & - & 1 & - & 1 & 1 & 1 & - & - & 1 & 1 & - & - & 1 & - & - & - \\
1 & 1 & - & - & 1 & 1 & 1 & - & 1 & - & 1 & 1 & - & 1 & 1 & - & 1 & 1 & 1 & 1 & 1 & - & 1 & - & 1 & 1 & 1 & - & - & 1 & - & - & - & 1 & - & - \\
1 & 1 & 1 & - & - & 1 & 1 & 1 & - & 1 & - & 1 & 1 & - & 1 & 1 & - & 1 & - & 1 & 1 & 1 & - & 1 & 1 & 1 & 1 & 1 & - & - & - & - & - & - & 1 & - \\
1 & 1 & 1 & 1 & - & - & 1 & 1 & 1 & - & 1 & - & 1 & 1 & - & 1 & 1 & - & 1 & - & 1 & 1 & 1 & - & - & 1 & 1 & 1 & 1 & - & - & - & - & - & - & 1
    \end{smallmatrix}\right)\\
    \\
    \\
    H'_{36} = &
    \left( \begin{smallmatrix}
        1 & - & - & - & - & - & - & - & 1 & 1 & - & - & - & 1 & - & 1 & - & - & - & 1 & 1 & - & 1 & 1 & - & - & - & 1 & - & 1 & - & - & - & 1 & 1 & - \\
- & 1 & - & - & - & - & - & - & 1 & 1 & 1 & 1 & 1 & - & 1 & 1 & 1 & - & 1 & - & 1 & 1 & - & 1 & 1 & - & 1 & - & 1 & 1 & 1 & - & - & 1 & 1 & 1 \\
- & - & 1 & - & - & - & 1 & - & - & 1 & 1 & 1 & - & 1 & - & 1 & 1 & 1 & 1 & 1 & - & 1 & 1 & - & 1 & 1 & - & 1 & - & 1 & 1 & 1 & - & - & 1 & 1 \\
- & - & - & 1 & - & - & 1 & - & - & - & - & 1 & 1 & - & - & - & 1 & - & - & 1 & 1 & - & 1 & 1 & 1 & - & 1 & - & - & - & 1 & 1 & - & - & - & - \\
- & - & - & - & 1 & - & 1 & 1 & 1 & - & - & 1 & 1 & 1 & - & 1 & - & 1 & 1 & - & 1 & 1 & - & 1 & - & 1 & 1 & 1 & - & 1 & 1 & 1 & 1 & 1 & - & - \\
- & - & - & - & - & 1 & 1 & 1 & 1 & 1 & - & - & 1 & 1 & 1 & - & 1 & - & 1 & 1 & - & 1 & 1 & - & 1 & - & 1 & 1 & 1 & - & - & 1 & 1 & 1 & 1 & - \\
- & - & 1 & 1 & 1 & 1 & 1 & - & - & - & - & - & - & 1 & 1 & 1 & 1 & - & - & 1 & 1 & 1 & - & 1 & - & 1 & 1 & - & 1 & 1 & - & 1 & - & 1 & 1 & 1 \\
- & - & - & - & 1 & 1 & - & 1 & - & - & - & - & - & - & 1 & 1 & 1 & 1 & - & - & 1 & - & 1 & - & 1 & - & 1 & 1 & - & 1 & 1 & - & 1 & - & 1 & 1 \\
1 & 1 & - & - & 1 & 1 & - & - & 1 & - & - & - & 1 & - & - & 1 & 1 & 1 & - & 1 & - & 1 & 1 & 1 & 1 & 1 & - & 1 & 1 & - & 1 & 1 & - & 1 & - & 1 \\
1 & 1 & 1 & - & - & 1 & - & - & - & 1 & - & - & 1 & 1 & - & - & 1 & 1 & 1 & - & 1 & - & 1 & 1 & - & 1 & 1 & - & 1 & 1 & 1 & 1 & 1 & - & 1 & - \\
- & 1 & 1 & - & - & - & - & - & - & - & 1 & - & 1 & 1 & 1 & - & - & 1 & - & 1 & - & - & - & 1 & 1 & - & 1 & 1 & - & 1 & - & 1 & 1 & 1 & - & 1 \\
- & 1 & 1 & 1 & 1 & - & - & - & - & - & - & 1 & 1 & 1 & 1 & 1 & - & - & 1 & 1 & 1 & - & 1 & - & 1 & 1 & - & 1 & 1 & - & 1 & - & 1 & 1 & 1 & - \\
- & 1 & - & 1 & 1 & 1 & - & - & 1 & 1 & 1 & 1 & 1 & - & - & - & - & - & - & 1 & 1 & 1 & 1 & - & - & 1 & 1 & 1 & - & 1 & - & 1 & 1 & - & 1 & 1 \\
1 & - & 1 & - & 1 & 1 & 1 & - & - & 1 & 1 & 1 & - & 1 & - & - & - & - & - & - & 1 & 1 & 1 & 1 & 1 & - & 1 & 1 & 1 & - & 1 & - & 1 & 1 & - & 1 \\
- & 1 & - & - & - & 1 & 1 & 1 & - & - & 1 & 1 & - & - & 1 & - & - & - & - & - & - & - & 1 & 1 & - & 1 & - & 1 & 1 & 1 & 1 & 1 & - & 1 & 1 & - \\
1 & 1 & 1 & - & 1 & - & 1 & 1 & 1 & - & - & 1 & - & - & - & 1 & - & - & 1 & 1 & - & - & 1 & 1 & 1 & - & 1 & - & 1 & 1 & - & 1 & 1 & - & 1 & 1 \\
- & 1 & 1 & 1 & - & 1 & 1 & 1 & 1 & 1 & - & - & - & - & - & - & 1 & - & 1 & 1 & 1 & - & - & 1 & 1 & 1 & - & 1 & - & 1 & 1 & - & 1 & 1 & - & 1 \\
- & - & 1 & - & 1 & - & - & 1 & 1 & 1 & 1 & - & - & - & - & - & - & 1 & - & 1 & 1 & - & - & - & 1 & 1 & 1 & - & 1 & - & 1 & 1 & - & 1 & 1 & - \\
- & 1 & 1 & - & 1 & 1 & - & - & - & 1 & - & 1 & - & - & - & 1 & 1 & - & 1 & - & - & - & - & - & - & - & 1 & 1 & - & - & - & 1 & - & 1 & - & - \\
1 & - & 1 & 1 & - & 1 & 1 & - & 1 & - & 1 & 1 & 1 & - & - & 1 & 1 & 1 & - & 1 & - & - & - & - & - & - & 1 & 1 & 1 & 1 & 1 & - & 1 & 1 & 1 & - \\
1 & 1 & - & 1 & 1 & - & 1 & 1 & - & 1 & - & 1 & 1 & 1 & - & - & 1 & 1 & - & - & 1 & - & - & - & 1 & - & - & 1 & 1 & 1 & - & 1 & - & 1 & 1 & 1 \\
- & 1 & 1 & - & 1 & 1 & 1 & - & 1 & - & - & - & 1 & 1 & - & - & - & - & - & - & - & 1 & - & - & 1 & - & - & - & - & 1 & 1 & - & - & - & 1 & - \\
1 & - & 1 & 1 & - & 1 & - & 1 & 1 & 1 & - & 1 & 1 & 1 & 1 & 1 & - & - & - & - & - & - & 1 & - & 1 & 1 & 1 & - & - & 1 & 1 & 1 & - & 1 & - & 1 \\
1 & 1 & - & 1 & 1 & - & 1 & - & 1 & 1 & 1 & - & - & 1 & 1 & 1 & 1 & - & - & - & - & - & - & 1 & 1 & 1 & 1 & 1 & - & - & 1 & 1 & 1 & - & 1 & - \\
- & 1 & 1 & 1 & - & 1 & - & 1 & 1 & - & 1 & 1 & - & 1 & - & 1 & 1 & 1 & - & - & 1 & 1 & 1 & 1 & 1 & - & - & - & - & - & - & 1 & 1 & 1 & 1 & - \\
- & - & 1 & - & 1 & - & 1 & - & 1 & 1 & - & 1 & 1 & - & 1 & - & 1 & 1 & - & - & - & - & 1 & 1 & - & 1 & - & - & - & - & - & - & 1 & 1 & 1 & 1 \\
- & 1 & - & 1 & 1 & 1 & 1 & 1 & - & 1 & 1 & - & 1 & 1 & - & 1 & - & 1 & 1 & 1 & - & - & 1 & 1 & - & - & 1 & - & - & - & 1 & - & - & 1 & 1 & 1 \\
1 & - & 1 & - & 1 & 1 & - & 1 & 1 & - & 1 & 1 & 1 & 1 & 1 & - & 1 & - & 1 & 1 & 1 & - & - & 1 & - & - & - & 1 & - & - & 1 & 1 & - & - & 1 & 1 \\
- & 1 & - & - & - & 1 & 1 & - & 1 & 1 & - & 1 & - & 1 & 1 & 1 & - & 1 & - & 1 & 1 & - & - & - & - & - & - & - & 1 & - & 1 & 1 & 1 & - & - & 1 \\
1 & 1 & 1 & - & 1 & - & 1 & 1 & - & 1 & 1 & - & 1 & - & 1 & 1 & 1 & - & - & 1 & 1 & 1 & 1 & - & - & - & - & - & - & 1 & 1 & 1 & 1 & 1 & - & - \\
- & 1 & 1 & 1 & 1 & - & - & 1 & 1 & 1 & - & 1 & - & 1 & 1 & - & 1 & 1 & - & 1 & - & 1 & 1 & 1 & - & - & 1 & 1 & 1 & 1 & 1 & - & - & - & - & - \\
- & - & 1 & 1 & 1 & 1 & 1 & - & 1 & 1 & 1 & - & 1 & - & 1 & 1 & - & 1 & 1 & - & 1 & - & 1 & 1 & 1 & - & - & 1 & 1 & 1 & - & 1 & - & - & - & - \\
- & - & - & - & 1 & 1 & - & 1 & - & 1 & 1 & 1 & 1 & 1 & - & 1 & 1 & - & - & 1 & - & - & - & 1 & 1 & 1 & - & - & 1 & 1 & - & - & 1 & - & - & - \\
1 & 1 & - & - & 1 & 1 & 1 & - & 1 & - & 1 & 1 & - & 1 & 1 & - & 1 & 1 & 1 & 1 & 1 & - & 1 & - & 1 & 1 & 1 & - & - & 1 & - & - & - & 1 & - & - \\
1 & 1 & 1 & - & - & 1 & 1 & 1 & - & 1 & - & 1 & 1 & - & 1 & 1 & - & 1 & - & 1 & 1 & 1 & - & 1 & 1 & 1 & 1 & 1 & - & - & - & - & - & - & 1 & - \\
- & 1 & 1 & - & - & - & 1 & 1 & 1 & - & 1 & - & 1 & 1 & - & 1 & 1 & - & - & - & 1 & - & 1 & - & - & 1 & 1 & 1 & 1 & - & - & - & - & - & - & 1
    \end{smallmatrix}\right)
\end{align*}

The matrix \(H_{36.n}\) is the normalization of \(H_{36}\), and \(H'_{36.n}\) is the matrix \(H_{36.n}\) after switching a Hall set.
\begin{align*}
H_{36.n} = &
    \left( \begin{smallmatrix}
        1 & 1 & 1 & 1 & 1 & 1 & 1 & 1 & 1 & 1 & 1 & 1 & 1 & 1 & 1 & 1 & 1 & 1 & 1 & 1 & 1 & 1 & 1 & 1 & 1 & 1 & 1 & 1 & 1 & 1 & 1 & 1 & 1 & 1 & 1 & 1 \\
1 & 1 & - & - & - & - & - & 1 & - & - & - & 1 & 1 & 1 & - & - & 1 & 1 & 1 & 1 & - & 1 & 1 & - & 1 & 1 & 1 & 1 & - & - & 1 & - & 1 & - & - & - \\
1 & - & 1 & - & - & - & 1 & 1 & 1 & - & - & 1 & - & - & 1 & - & 1 & - & 1 & - & 1 & 1 & - & 1 & 1 & - & - & - & 1 & - & 1 & 1 & 1 & 1 & - & - \\
1 & - & - & 1 & - & - & 1 & - & 1 & 1 & - & 1 & 1 & 1 & - & 1 & 1 & - & - & - & - & - & - & - & 1 & - & 1 & 1 & - & 1 & 1 & 1 & - & 1 & 1 & - \\
1 & - & - & - & 1 & - & 1 & - & - & 1 & 1 & 1 & 1 & - & 1 & - & - & - & 1 & 1 & - & 1 & 1 & - & - & - & 1 & - & 1 & - & 1 & 1 & - & - & 1 & 1 \\
1 & - & - & - & - & 1 & 1 & - & - & - & 1 & - & 1 & - & - & 1 & 1 & 1 & 1 & - & 1 & 1 & - & 1 & 1 & 1 & 1 & - & - & 1 & - & 1 & - & - & - & 1 \\
1 & - & 1 & 1 & 1 & 1 & 1 & 1 & 1 & 1 & 1 & - & - & - & - & - & 1 & 1 & - & - & - & 1 & 1 & - & - & - & 1 & 1 & - & - & - & 1 & 1 & - & - & - \\
1 & 1 & 1 & - & - & - & 1 & 1 & - & - & - & 1 & 1 & - & 1 & 1 & - & 1 & - & - & 1 & - & 1 & - & - & - & - & 1 & - & 1 & - & 1 & 1 & - & 1 & 1 \\
1 & - & 1 & 1 & - & - & 1 & - & 1 & - & - & 1 & - & - & - & 1 & - & 1 & 1 & 1 & - & - & 1 & 1 & - & 1 & 1 & 1 & 1 & - & - & - & - & 1 & - & 1 \\
1 & - & - & 1 & 1 & - & 1 & - & - & 1 & - & 1 & - & 1 & - & - & - & 1 & - & - & 1 & 1 & 1 & 1 & 1 & 1 & - & - & 1 & 1 & - & - & 1 & - & 1 & - \\
1 & - & - & - & 1 & 1 & 1 & - & - & - & 1 & 1 & - & 1 & 1 & - & 1 & 1 & - & 1 & - & - & - & 1 & - & - & - & 1 & - & 1 & 1 & - & 1 & 1 & - & 1 \\
1 & 1 & 1 & 1 & 1 & - & - & 1 & 1 & 1 & 1 & 1 & 1 & - & - & - & - & 1 & 1 & - & - & - & - & 1 & 1 & - & - & - & - & 1 & 1 & - & - & - & - & 1 \\
1 & 1 & - & 1 & 1 & 1 & - & 1 & - & - & - & 1 & 1 & 1 & 1 & 1 & - & 1 & - & - & - & 1 & - & 1 & - & - & 1 & - & 1 & - & - & 1 & - & 1 & - & - \\
1 & 1 & - & 1 & - & - & - & - & - & 1 & 1 & - & 1 & 1 & - & - & 1 & - & 1 & - & 1 & - & 1 & 1 & - & - & - & 1 & 1 & - & - & 1 & 1 & 1 & - & 1 \\
1 & - & 1 & - & 1 & - & - & 1 & - & - & 1 & - & 1 & - & 1 & - & 1 & - & - & - & - & - & 1 & 1 & 1 & 1 & 1 & 1 & 1 & 1 & - & - & - & 1 & 1 & - \\
1 & - & - & 1 & - & 1 & - & 1 & 1 & - & - & - & 1 & - & - & 1 & 1 & - & - & 1 & - & 1 & 1 & 1 & - & - & - & - & 1 & 1 & 1 & - & 1 & - & 1 & 1 \\
1 & 1 & 1 & 1 & - & 1 & 1 & - & - & - & 1 & - & - & 1 & 1 & 1 & 1 & 1 & 1 & - & - & - & 1 & - & 1 & - & - & - & 1 & - & 1 & - & - & - & 1 & - \\
1 & 1 & - & - & - & 1 & 1 & 1 & 1 & 1 & 1 & 1 & 1 & - & - & - & 1 & 1 & - & 1 & 1 & - & - & - & - & 1 & - & - & 1 & - & - & - & - & 1 & 1 & - \\
1 & 1 & 1 & - & 1 & 1 & - & - & 1 & - & - & 1 & - & 1 & - & - & 1 & - & 1 & 1 & 1 & - & 1 & 1 & - & - & 1 & - & - & 1 & - & 1 & - & - & 1 & - \\
1 & 1 & - & - & 1 & - & - & - & 1 & - & 1 & - & - & - & - & 1 & - & 1 & 1 & 1 & - & 1 & - & - & 1 & - & - & 1 & 1 & 1 & - & 1 & 1 & 1 & 1 & - \\
1 & - & 1 & - & - & 1 & - & 1 & - & 1 & - & - & - & 1 & - & - & - & 1 & 1 & - & 1 & 1 & - & - & - & - & 1 & 1 & 1 & 1 & 1 & - & - & 1 & 1 & 1 \\
1 & 1 & 1 & - & 1 & 1 & 1 & - & - & 1 & - & - & 1 & - & - & 1 & - & - & - & 1 & 1 & 1 & 1 & 1 & 1 & - & - & 1 & - & - & 1 & - & - & 1 & - & - \\
1 & 1 & - & - & 1 & - & 1 & 1 & 1 & 1 & - & - & - & 1 & 1 & 1 & 1 & - & 1 & - & - & 1 & 1 & - & - & 1 & - & - & - & 1 & - & - & - & 1 & - & 1 \\
1 & - & 1 & - & - & 1 & - & - & 1 & 1 & 1 & 1 & 1 & 1 & 1 & 1 & - & - & 1 & - & - & 1 & - & 1 & - & 1 & - & 1 & - & - & - & - & 1 & - & 1 & - \\
1 & 1 & 1 & 1 & - & 1 & - & - & - & 1 & - & 1 & - & - & 1 & - & 1 & - & - & 1 & - & 1 & - & - & 1 & 1 & - & 1 & 1 & 1 & - & 1 & - & - & - & 1 \\
1 & 1 & - & - & - & 1 & - & - & 1 & 1 & - & - & - & - & 1 & - & - & 1 & - & - & - & - & 1 & 1 & 1 & 1 & 1 & - & - & - & 1 & 1 & 1 & 1 & 1 & 1 \\
1 & 1 & - & 1 & 1 & 1 & 1 & - & 1 & - & - & - & 1 & - & 1 & - & - & - & 1 & - & 1 & - & - & - & - & 1 & 1 & 1 & 1 & 1 & 1 & - & 1 & - & - & - \\
1 & 1 & - & 1 & - & - & 1 & 1 & 1 & - & 1 & - & - & 1 & 1 & - & - & - & - & 1 & 1 & 1 & - & 1 & 1 & - & 1 & 1 & - & - & - & - & - & - & 1 & 1 \\
1 & - & 1 & - & 1 & - & - & - & 1 & 1 & - & - & 1 & 1 & 1 & 1 & 1 & 1 & - & 1 & 1 & - & - & - & 1 & - & 1 & - & 1 & - & - & - & 1 & - & - & 1 \\
1 & - & - & 1 & - & 1 & - & 1 & - & 1 & 1 & 1 & - & - & 1 & 1 & - & - & 1 & 1 & 1 & - & 1 & - & 1 & - & 1 & - & - & 1 & - & - & 1 & 1 & - & - \\
1 & 1 & 1 & 1 & 1 & - & - & - & - & - & 1 & 1 & - & - & - & 1 & 1 & - & - & - & 1 & 1 & - & - & - & 1 & 1 & - & - & - & 1 & - & 1 & 1 & 1 & 1 \\
1 & - & 1 & 1 & 1 & 1 & 1 & 1 & - & - & - & - & 1 & 1 & - & - & - & - & 1 & 1 & - & - & - & - & 1 & 1 & - & - & - & - & - & 1 & 1 & 1 & 1 & 1 \\
1 & 1 & 1 & - & - & - & 1 & 1 & - & 1 & 1 & - & - & 1 & - & 1 & - & - & - & 1 & - & - & - & 1 & - & 1 & 1 & - & 1 & 1 & 1 & 1 & 1 & - & - & - \\
1 & - & 1 & 1 & - & - & - & - & 1 & - & 1 & - & 1 & 1 & 1 & - & - & 1 & - & 1 & 1 & 1 & 1 & - & - & 1 & - & - & - & 1 & 1 & 1 & - & 1 & - & - \\
1 & - & - & 1 & 1 & - & - & 1 & - & 1 & - & - & - & - & 1 & 1 & 1 & 1 & 1 & 1 & 1 & - & - & 1 & - & 1 & - & 1 & - & - & 1 & 1 & - & - & 1 & - \\
1 & - & - & - & 1 & 1 & - & 1 & 1 & - & 1 & 1 & - & 1 & - & 1 & - & - & - & - & 1 & - & 1 & - & 1 & 1 & - & 1 & 1 & - & 1 & 1 & - & - & - & 1
    \end{smallmatrix}\right)\\
    &\\
    &\\
    H'_{36.n} = & \left( \begin{smallmatrix}
        1 & 1 & 1 & 1 & 1 & 1 & 1 & 1 & 1 & 1 & 1 & 1 & 1 & 1 & 1 & 1 & 1 & 1 & 1 & 1 & 1 & 1 & 1 & 1 & 1 & 1 & 1 & 1 & 1 & 1 & 1 & 1 & 1 & 1 & 1 & 1 \\
1 & 1 & - & - & - & - & - & 1 & 1 & - & - & - & - & 1 & - & 1 & 1 & 1 & 1 & 1 & - & 1 & 1 & - & 1 & 1 & - & 1 & - & 1 & - & - & 1 & 1 & - & - \\
1 & - & 1 & - & - & - & 1 & 1 & 1 & - & - & 1 & - & - & 1 & - & 1 & - & 1 & - & 1 & 1 & - & 1 & 1 & - & - & - & 1 & - & 1 & 1 & 1 & 1 & - & - \\
1 & - & - & 1 & - & - & 1 & - & 1 & 1 & - & 1 & 1 & 1 & - & 1 & 1 & - & - & - & - & - & - & - & 1 & - & 1 & 1 & - & 1 & 1 & 1 & - & 1 & 1 & - \\
1 & - & - & - & 1 & - & 1 & - & 1 & 1 & 1 & - & - & - & 1 & 1 & - & - & 1 & 1 & - & 1 & 1 & - & - & - & - & - & 1 & 1 & - & 1 & - & 1 & 1 & 1 \\
1 & - & - & - & - & 1 & 1 & - & - & - & 1 & - & 1 & - & - & 1 & 1 & 1 & 1 & - & 1 & 1 & - & 1 & 1 & 1 & 1 & - & - & 1 & - & 1 & - & - & - & 1 \\
1 & - & 1 & 1 & 1 & 1 & 1 & 1 & 1 & 1 & 1 & - & - & - & - & - & 1 & 1 & - & - & - & 1 & 1 & - & - & - & 1 & 1 & - & - & - & 1 & 1 & - & - & - \\
1 & 1 & 1 & - & - & - & 1 & 1 & - & - & - & 1 & 1 & - & 1 & 1 & - & 1 & - & - & 1 & - & 1 & - & - & - & - & 1 & - & 1 & - & 1 & 1 & - & 1 & 1 \\
1 & 1 & 1 & 1 & 1 & - & 1 & - & 1 & - & - & 1 & - & - & - & 1 & - & 1 & 1 & - & - & - & - & 1 & - & 1 & 1 & 1 & 1 & - & - & - & - & 1 & - & 1 \\
1 & - & - & 1 & 1 & - & 1 & - & - & 1 & - & 1 & - & 1 & - & - & - & 1 & - & - & 1 & 1 & 1 & 1 & 1 & 1 & - & - & 1 & 1 & - & - & 1 & - & 1 & - \\
1 & - & - & - & 1 & 1 & 1 & - & - & - & 1 & 1 & - & 1 & 1 & - & 1 & 1 & - & 1 & - & - & - & 1 & - & - & - & 1 & - & 1 & 1 & - & 1 & 1 & - & 1 \\
1 & - & 1 & 1 & - & - & - & 1 & 1 & 1 & 1 & 1 & 1 & - & - & - & - & 1 & 1 & 1 & - & - & 1 & 1 & 1 & - & - & - & - & 1 & 1 & - & - & - & - & 1 \\
1 & - & - & 1 & - & 1 & - & 1 & - & - & - & 1 & 1 & 1 & 1 & 1 & - & 1 & - & 1 & - & 1 & 1 & 1 & - & - & 1 & - & 1 & - & - & 1 & - & 1 & - & - \\
1 & 1 & - & 1 & - & - & - & - & - & 1 & 1 & - & 1 & 1 & - & - & 1 & - & 1 & - & 1 & - & 1 & 1 & - & - & - & 1 & 1 & - & - & 1 & 1 & 1 & - & 1 \\
1 & - & 1 & - & 1 & - & - & 1 & - & - & 1 & - & 1 & - & 1 & - & 1 & - & - & - & - & - & 1 & 1 & 1 & 1 & 1 & 1 & 1 & 1 & - & - & - & 1 & 1 & - \\
1 & 1 & - & 1 & 1 & 1 & - & 1 & 1 & - & - & - & 1 & - & - & 1 & 1 & - & - & - & - & 1 & - & 1 & - & - & - & - & 1 & 1 & 1 & - & 1 & - & 1 & 1 \\
1 & 1 & 1 & 1 & - & 1 & 1 & - & - & - & 1 & - & - & 1 & 1 & 1 & 1 & 1 & 1 & - & - & - & 1 & - & 1 & - & - & - & 1 & - & 1 & - & - & - & 1 & - \\
1 & 1 & - & - & - & 1 & 1 & 1 & 1 & 1 & 1 & 1 & 1 & - & - & - & 1 & 1 & - & 1 & 1 & - & - & - & - & 1 & - & - & 1 & - & - & - & - & 1 & 1 & - \\
1 & 1 & 1 & - & 1 & 1 & - & - & 1 & - & - & 1 & - & 1 & - & - & 1 & - & 1 & 1 & 1 & - & 1 & 1 & - & - & 1 & - & - & 1 & - & 1 & - & - & 1 & - \\
1 & 1 & - & - & 1 & - & - & - & - & - & 1 & 1 & 1 & - & - & - & - & 1 & 1 & 1 & - & 1 & - & - & 1 & - & 1 & 1 & 1 & - & 1 & 1 & 1 & - & 1 & - \\
1 & - & 1 & - & - & 1 & - & 1 & - & 1 & - & - & - & 1 & - & - & - & 1 & 1 & - & 1 & 1 & - & - & - & - & 1 & 1 & 1 & 1 & 1 & - & - & 1 & 1 & 1 \\
1 & 1 & 1 & - & 1 & 1 & 1 & - & - & 1 & - & - & 1 & - & - & 1 & - & - & - & 1 & 1 & 1 & 1 & 1 & 1 & - & - & 1 & - & - & 1 & - & - & 1 & - & - \\
1 & 1 & - & - & 1 & - & 1 & 1 & - & 1 & - & 1 & 1 & 1 & 1 & - & 1 & - & 1 & - & - & 1 & 1 & - & - & 1 & 1 & - & - & - & 1 & - & - & - & - & 1 \\
1 & - & 1 & - & - & 1 & - & - & 1 & 1 & 1 & 1 & 1 & 1 & 1 & 1 & - & - & 1 & - & - & 1 & - & 1 & - & 1 & - & 1 & - & - & - & - & 1 & - & 1 & - \\
1 & 1 & 1 & 1 & - & 1 & - & - & - & 1 & - & 1 & - & - & 1 & - & 1 & - & - & 1 & - & 1 & - & - & 1 & 1 & - & 1 & 1 & 1 & - & 1 & - & - & - & 1 \\
1 & 1 & - & - & - & 1 & - & - & 1 & 1 & - & - & - & - & 1 & - & - & 1 & - & - & - & - & 1 & 1 & 1 & 1 & 1 & - & - & - & 1 & 1 & 1 & 1 & 1 & 1 \\
1 & - & - & 1 & - & 1 & 1 & - & 1 & - & - & - & 1 & - & 1 & - & - & - & 1 & 1 & 1 & - & 1 & - & - & 1 & 1 & 1 & 1 & 1 & 1 & - & 1 & - & - & - \\
1 & 1 & - & 1 & - & - & 1 & 1 & 1 & - & 1 & - & - & 1 & 1 & - & - & - & - & 1 & 1 & 1 & - & 1 & 1 & - & 1 & 1 & - & - & - & - & - & - & 1 & 1 \\
1 & - & 1 & - & 1 & - & - & - & 1 & 1 & - & - & 1 & 1 & 1 & 1 & 1 & 1 & - & 1 & 1 & - & - & - & 1 & - & 1 & - & 1 & - & - & - & 1 & - & - & 1 \\
1 & 1 & - & 1 & 1 & 1 & - & 1 & - & 1 & 1 & 1 & - & - & 1 & 1 & - & - & 1 & - & 1 & - & - & - & 1 & - & 1 & - & - & 1 & - & - & 1 & 1 & - & - \\
1 & - & 1 & 1 & - & - & - & - & - & - & 1 & 1 & - & - & - & 1 & 1 & - & - & 1 & 1 & 1 & 1 & - & - & 1 & 1 & - & - & - & 1 & - & 1 & 1 & 1 & 1 \\
1 & - & 1 & 1 & 1 & 1 & 1 & 1 & - & - & - & - & 1 & 1 & - & - & - & - & 1 & 1 & - & - & - & - & 1 & 1 & - & - & - & - & - & 1 & 1 & 1 & 1 & 1 \\
1 & 1 & 1 & - & - & - & 1 & 1 & - & 1 & 1 & - & - & 1 & - & 1 & - & - & - & 1 & - & - & - & 1 & - & 1 & 1 & - & 1 & 1 & 1 & 1 & 1 & - & - & - \\
1 & 1 & 1 & 1 & 1 & - & - & - & 1 & - & 1 & - & 1 & 1 & 1 & - & - & 1 & - & - & 1 & 1 & - & - & - & 1 & - & - & - & 1 & 1 & 1 & - & 1 & - & - \\
1 & - & - & 1 & 1 & - & - & 1 & - & 1 & - & - & - & - & 1 & 1 & 1 & 1 & 1 & 1 & 1 & - & - & 1 & - & 1 & - & 1 & - & - & 1 & 1 & - & - & 1 & - \\
1 & - & - & - & 1 & 1 & - & 1 & 1 & - & 1 & 1 & - & 1 & - & 1 & - & - & - & - & 1 & - & 1 & - & 1 & 1 & - & 1 & 1 & - & 1 & 1 & - & - & - & 1
    \end{smallmatrix}\right)
\end{align*}

\end{document}